\title%[]
{Mean and variance of balanced P\'olya urns}
\date{19 February, 2016; revised 17 December, 2019}
\author{Svante Janson}
\thanks{Partly supported by the Knut and Alice Wallenberg Foundation}
\address{Department of Mathematics, Uppsala University, PO Box 480,
SE-751~06 Uppsala, Sweden}
\email{svante.janson@math.uu.se}
\urladdr{http://www.math.uu.se/svante-janson}
\subjclass[2010]{60C05 (60F25)} 
\numberwithin{equation}{section}
\renewcommand\le{\leqslant}
\renewcommand\ge{\geqslant}
\theoremstyle{plain}% default
\newtheorem{theorem}{Theorem}[section]
\newtheorem{lemma}[theorem]{Lemma}
\theoremstyle{definition}
\newtheorem{example}[theorem]{Example}
\newtheorem{problem}[theorem]{Problem}
\newtheorem{remark}[theorem]{Remark}
\newtheorem*{ack}{Acknowledgement}
\theoremstyle{remark}
\newenvironment{romenumerate}[1][-10pt]{% optional argument changes indentation
\addtolength{\leftmargini}{#1}\begin{enumerate}% gives (i), (ii) etc.
 }{\end{enumerate}}
\newcounter{oldenumi}
\newcounter{thmenumerate}
\newenvironment{thmenumerate}
{\setcounter{thmenumerate}{0}%
 \def\item{\par% \ifnum\thethmenumerate=0\else\par\fi %\noindent\fi
 \refstepcounter{thmenumerate}\textup{(\roman{thmenumerate})\enspace}}
}
{}
\newcounter{xenumerate}   %no left indentation; thus wider lines
\newcommand\pfitemx[1]{\par#1:}
\newcommand\pfitemref[1]{\pfitemx{\ref{#1}}}
\newcommand{\refT}[1]{Theorem~\ref{#1}}
\newcommand{\refTs}[1]{Theorems~\ref{#1}}
\newcommand{\refL}[1]{Lemma~\ref{#1}}
\newcommand{\refR}[1]{Remark~\ref{#1}}
\newcommand{\refS}[1]{Section~\ref{#1}}
\newcommand{\refSs}[1]{Sections~\ref{#1}}
\newcommand{\refE}[1]{Example~\ref{#1}}
\newcommand{\refApp}[1]{Appendix~\ref{#1}}
\xdef\klockan{\the\count1.0\the\count255}
\xdef\klockan{\the\count1.\the\count255}\fi
\newcommand{\sumko}{\sum_{k=0}^\infty}
\newcommand{\sumk}{\sum_{k=1}^\infty}
\newcommand{\sumin}{\sum_{i=1}^n}
\newcommand\set[1]{\ensuremath{\{#1\}}}
\newcommand\xpar[1]{(#1)}
\newcommand\bigpar[1]{\bigl(#1\bigr)}
\newcommand\Bigpar[1]{\Bigl(#1\Bigr)}
\newcommand\lrpar[1]{\left(#1\right)}
\def\rompar(#1){\textup(#1\textup)}    % usage: \rompar(...)
\newcommand\parfrac[2]{\lrpar{\frac{#1}{#2}}}
\newcommand\Bigparfrac[2]{\Bigpar{\frac{#1}{#2}}}
\def\xexp(#1){e^{#1}}
\newcommand\ceil[1]{\lceil#1\rceil}
\newcommand\ntoo{\ensuremath{{n\to\infty}}}
\newcommand\jtoo{\ensuremath{{j\to\infty}}}
\newcommand\too{\to\infty}
\newcommand\norm[1]{\|#1\|}
\newcommand\punkt{.\spacefactor=1000}    % om problem!
\newcommand\ie{i.e\punkt}
\newcommand\eg{e.g\punkt}
\newcommand{\as}{a.s\punkt}
\newcommand\ii{\mathrm{i}}
\newcommand{\tend}{\longrightarrow}
\newcommand\dto{\overset{\mathrm{d}}{\tend}}
\newcommand\asto{\overset{\mathrm{a.s.}}{\tend}}
\newcommand\bbR{\mathbb R}
\newcommand\bbC{\mathbb C}
\newcounter{CC}
\newcounter{cc}
\renewcommand\Re{\operatorname{Re}}
\renewcommand\Im{\operatorname{Im}}
\newcommand\E{\operatorname{\mathbb E{}}}
\renewcommand\P{\operatorname{\mathbb P{}}}
\newcommand\Var{\operatorname{Var}}
\newcommand\ga{\alpha}
\newcommand\gd{\delta}
\newcommand\gD{\Delta}
\newcommand\gG{\Gamma}
\newcommand\gl{\lambda}
\newcommand\gs{\sigma}
\newcommand\gS{\Sigma}
\newcommand\eps{\varepsilon}
\renewcommand\phi{\xxx}  %% WARNING
\newcommand\cF{\mathcal F}
\newcommand\smatrixx[1]{\left(\begin{smallmatrix}#1\end{smallmatrix}\right)}
\newcommand\qw{^{-1}}
\newcommand\qww{^{-2}}
\newcommand\qq{^{1/2}}
\newcommand\qqw{^{-1/2}}
\newcommand\intoi{\int_0^1}
\newcommand\intoo{\int_0^\infty}
\newcommand\ooi{(0,1]}
\newcommand\ooo{[0,\infty)}
\newcommand\dd{\,\mathrm{d}}
\newcommand\ddx{\mathrm{d}}
\newcommand\lhs{left-hand side}
\newcommand\rhs{right-hand side}
\newcommand\etto{\bigpar{1+o(1)}}
\newcommand\sumgl{\sum_{\gl}}
\newcommand\sumgla{\sum_{\gl\in\gs(A)}}
\newcommand\sumglx{\sum_{\gl\neq\gl_1}}
\newcommand\summu{\sum_{\mu}}
\newcommand\summux{\sum_{\mu\neq\gl_1}}
\newcommand\sumiq{\sum_{i=1}^q}
\newcommand\sumjq{\sum_{j=1}^q}
\newcommand\tin{T_{i,n,\gl,\mu}}
\newcommand\txn{T_{\ceil{xn},n,\gl,\mu}}
\newcommand\tyn{T_{\ceil{n^y},n,\gl,\mu}}
\newcommand\tqn[1]{T_{#1,n,\gl,\mu}}
\newcommand\tynb{T_{\ceil{n^y},n,\gl,\bgl}}
\newcommand\gsa{\gs(A)}
\newcommand\gsax{\gs(A)\setminus\set{\gl_1}}
\newcommand\xnn{_{\ceil{xn},n}}
\newcommand\PIx{\widehat P}
\newcommand\nyn{_{\ceil{n^y},n}}
\newcommand\bgl{\overline \gl}
\newcommand\mm{^{(m)}}
\newcommand\wmm{\frac{1}{m!}}
\newcommand\uniz{uniformly for $z$ in any fixed compact set in the complex plane}
\newcommand\qll{^{(\ell)}}
\newcommand\el{E_\lambda}
\newcommand\nl{N_\lambda}
\newcommand\pl{P_\lambda}
\newcommand\pli{P_{\lambda_1}}
\newcommand\nul{\nu_\gl}
\newcommand\tA{\widetilde A}
\newcommand\hF{\widehat F}
\newcommand\hA{\widehat A}
\newcommand{\Polya}{P\'olya}
\begin{document}

\begin{abstract} 
It is well-known that in a small P\'olya urn, i.e., an urn where second
largest real part of an eigenvalue is at most half the largest eigenvalue, 
the distribution of the numbers of balls of different colours in the urn is
asymptotically normal under weak additional conditions. 
We consider the balanced case, and then 
give asymptotics of the mean and the covariance matrix, 
showing that after appropriate normalization, 
the mean and covariance matrix converge to the mean and variance of the
limiting normal distribution. 
\end{abstract}

\maketitle

\section{Introduction}\label{S:intro}

A (generalized) \Polya{} urn contains balls of different colours.
A ball is drawn at random from the urn, and is replaced by a set of balls
that depends on the colour of the drawn balls. (Moreover, the replacement
set may be random, with a distribution depending on the drawn colour). 
This is repeated an infinite number of times, and we are interested in the
asymptotic composition of the urn.
For details, and the assumptions used in the present paper, see
\refS{Spolya}; for the history of \Polya{} urns, see \eg{} \citet{Mahmoud}.
 
It is well-known, and proved 
under various conditions
in a number of papers by  a variety of authors,
see \eg{} \cite[Theorems 3.22--3.24]{SJ154}, 
that the asymptotic behaviour depends on the eigenvalues of
the \emph{intensity matrix} of the urn defined in \eqref{A} below, and in
particular on the two largest (in real part) eigenvalues $\gl_1$ and
$\gl_2$.
If $\Re\gl_2\le\frac12\gl_1$ (a  \emph{small urn}), 
then, under some assumptions (including some version of irreducibility), 
the number of balls of a given colour is asymptotically normal,
while if $\Re\gl_2>\frac12\gl_1$ (a \emph{large urn}), 
then this is not true: 
there are (again under some assumptions, and after suitable normalization)
limits in distribution, but the limiting distributions
have no simple description and are (typically, at least) not normal;
furthermore, there may be 
oscillations so that suitable subsequences converge in
distribution but not the full sequence.
Another difference is that for a small urn, the limit is independent of 
the initial state, and therefore independent of
what
happens in any fixed finite set of draws (i.e., the limit theorem is mixing,
see \cite[Proposition 2]{AldousEagleson-stable}),
while for a large urn, on the contrary, 
there is an almost sure (\as) limit result
and thus 
the limit is essentially determined by what happens early in the process.

For large urns, \citet{P} proved,
assuming that the urn is \emph{balanced}, see \refS{Spolya}, 
a limit theorem which shows such an \as{} result and also shows 
convergence in $L^p$ for any $p$, 
and thus convergence of all moments.

For small urns, however, less has been known about moment convergence in
general.
Balanced deterministic urns with 2 colours were considered already by
\citet{Bernstein1,Bernstein2}, who showed both the
asymptotic normality in the small urn case and gave results on mean and
variance; \citet{Savkevitch} also considered this urn and studied
the mean and variance and, moreover, the third and fourth moments.
\citet{BagchiPal} (independently, but 45 years later)
gave another proof of asymptotic normality for balanced
deterministic small urns with 2 colours, by using the method of moments and thus
proving moment convergence as part of the proof.
%\citet{Smythe} uses variance calculations, but also convergence in probability
% of conditional expectations, and thus no variance estimates are even
% implicit.  
\citet{BaiHu,BaiHu2005}, who consider an arbitrary number of colours and
allow random
replacements (under somewhat different conditions than ours, allowing also
time-dependent replacements), 
show asymptotic results for the mean and
(co)variance as part of their proofs of asymptotic
normality for small urns, 
using the same decomposition of the covariance matrix  as in the
present paper; 
however, these results are hidden inside the proof and are not
stated explicitly. 
More recently, \citet{SJ310} proved explicit results on asymptotics of
mean and variance, and also higher central moments of arbitrary order,
for irreducible small urns; the method there combined the known result on
asymptotic normality  in this case, and moment estimates by the method in
\cite{P} leading to uniform integrability. 

Note that asymptotics of mean and
variance often are of
interest in applications, complementing results on convergence in
distribution. (In some applications, results on mean and variance have been
proved separately by other methods.)
Moreover, although not surprising, it is satisfying to know that in a case
where a central limit theorem holds, also the mean and variance converge as
suggested by this central limit theorem. 
In particular, loosely speaking, the variance of the number of balls of
a given colour is asymptotic to the asymptotic variance.
Furthermore, for this random number,
the standard normalization by mean and standard deviation yields
convergence to a standard normal distribution.

The main purpose of the present paper is to 
give explicit asymptotics for
the first and second moments for a balanced small urn by an elementary direct
method. (Our assumptions are somewhat weaker than in \cite{SJ310}; we do not
assume that a central limit theorem holds, and 
also some non-normal cases are included, see \refR{Rtri}.)
We also include a simple result on non-degeneracy of the limit (\refT{TS}).
Precise statements are given in \refS{Sresults}.
Some results (\eg{} \refT{TE} and the lemmas in \refSs{Smatrix} and \ref{Spf2})
%\refLs{LY0}--\ref{LYlim})
apply also to large urns.

Our method is closely related to the one used by \eg{} \citet{BaiHu,BaiHu2005};
it is also related to the method by \citet{P} and \cite{SJ310}, but
substantially simpler. 
The main idea (which has been used in various forms earlier, in particular
by \citet{BaiHu,BaiHu2005}) is that the
drawing of a ball and the subsequent addition of a set of balls, at time
$k$, say,
influences the composition of the urn at a later time $n$
not only directly by the added balls, but also indirectly since the added
balls change the probabilities for later draws. By including the expectation
of these later indirect effects, we find the real effect at time $n$ of the
draw at time $k$, and we may write the composition at time $n$ as the sum,
for $k\le n$, of these contributions, see \eqref{kia}.
The contributions for different $k$
are orthogonal, and thus the variance can be found by summing the variances of
these contribution.

Another purpose of the present paper is to demonstrate this elementary
method in detail and how can be used to obtain important results.
We believe that although the method is closely related to earlier proofs in
\eg{} \cite{BaiHu,BaiHu2005}, making the decomposition
\eqref{kia} explicit illuminates both the proofs and the general behaviour of 
\Polya{} urns, in particular the difference between large and small urns.
See the comments in \refS{Sfurther}.

\refS{Spolya} gives definitions and introduces the notation.
\refS{Sresults} contains the statements of the main results, which are proved
in Sections \ref{Spf1}--\ref{Spf2}.
\refS{Sex} presents some applications,
and \refS{Sfurther} contains some further comments on 
%whether the first draws are important or not in the long run, and the
where the variance comes from, \ie, which draws are most important,
and the difference between small and large urns.
The appendices give some further, more technical, results.

\begin{remark}
We consider in the present paper only the mean and (co)\-variance.
As said above, similar results on convergence of higher moments for balanced
small urns are given in \cite{SJ310}
(under somewhat more restrictive assumptions than in the present paper),
by another method, based on showing uniform integrability. 
(An anonymous referee of the first version of the present paper has
suggested another, simpler, way to show uniform integrability; this can be
used to simplify the proofs in \cite{SJ310}.)

It is possible that the method in
the present 
paper can be extended to handle higher moments too, but we do not see any
immediate extension.
On the other hand, for the first and second moments,
the present method seems simpler, and perhaps also more informative, than
the one in \cite{SJ310}.
\end{remark}

\begin{problem}
In the present paper,
we consider only balanced urns.
We leave it as a challenging open problem to prove (or disprove?)
similar results for non-balanced urns.
\end{problem}

\section{\Polya{} urns}\label{Spolya}

\subsection{Definition and assumptions}

A (generalized) \Polya{} urn process is defined as follows.
(See \eg{} \citet{Mahmoud}, \citet{JKotz},
\citet{SJ154}, \citet{Flajolet-analytic} and \citet{P} for the history and
further references, as well as some different methods used to study such urns.)
There are balls of $q$ colours (types) $1,\dots, q$, where $2\le q<\infty$.
The composition of the urn at time $n$ is given by the vector
$X_n=(X_{n1},\dots,X_{nq})\in \ooo^q$, 
where $X_{ni} $ is the number of balls of colour $i$. 
The urn starts with a given vector $X_0$, 
%which we assume is non-random,
and evolves according to a discrete time Markov process. 
Each colour $i$ has an \emph{activity} (or weight) $a_i\ge0$,
and a (generally random) 
\emph{replacement vector} $\xi_i=(\xi_{i1},\dots, \xi_{iq})$.
At each time $ n+1\ge 1 $, 
the urn is updated by drawing
one ball at random from the urn, with the probability of any ball
proportional to its activity. Thus, the drawn ball has colour $i$  
with probability 
\begin{equation}
\label{urn}
 \frac{a_iX_{ni}}{\sum_{j}a_jX_{nj}}. 
\end{equation}
If the drawn ball has type $i$, it is replaced
together with $\Delta X_{nj}$ balls of type $j$, $j=1,\dots,n$,
where the random vector 
$ \Delta X_{n}=(\Delta X_{n1},\dots, \Delta X_{nq}) $
has the same distribution as $ \xi_i$ and is
independent of everything else that has happened so far.  
Thus, the urn is updated to $X_{n+1}=X_n+\gD X_n$.

In many applications, the numbers $X_{nj}$ and $\xi_{ij}$ are integers, but
that is not necessary;
it has been noted several times that the \Polya{} urn process
is well-defined also for \emph{real} $X_{ni}$ and $\xi_{ij}$, 
with probabilities for the different replacements still given by \eqref{urn},
see \eg{} \cite{BaiHu}, 
%\cite{BaiHu2005}, 
\cite[p.~126]{BaiHuR}, %p. 126
\cite[Remark 4.2]{SJ154}, \cite[Remark 1.11]{SJ169} and
\cite{P}, and earlier \cite{Jirina} for the related case of branching
processes; the ``number of balls'' $X_{ni}$ may thus be any nonnegative real
number. (This can be interpreted as the amount (mass) of colour $i$ in the
urn, rather than the number of discrete balls.) 
The replacements $\xi_{ij}$ are thus random real numbers. 
We allow them to be negative,
meaning that balls may be subtracted from the urn.
However, we always assume that $X_0$ and the random vectors $\xi_i$ are
such that,
for every $n\ge0$,  \as
\begin{equation}\label{ten}
\text{each }
 X_{ni}\ge0 \quad \text{and}\quad  \sum_i a_i X_{ni}>0, 
\end{equation}
so that
%$X_{ni}\ge0$ and $\sum_i a_i X_{ni}>0$ for every $n$, and thus
\eqref{urn} really gives meaningful probabilities
(and the process does not stop due to lack of balls to be removed). 
An urn with such initial
conditions and replacement rules is called \emph{tenable}.

\begin{remark}\label{Rten}
A sufficient condition for tenability, which often is assumed in other
papers (sometimes with simple modifications), is that 
all $\xi_{ij}$ and $X_{0i}$ are integers
with
$\xi_{ij}\ge0$ for $j\neq i$ and $\xi_{ii}\ge -1$
(this means that we may remove the drawn ball but no other ball),
and furthermore, for example,   
$\sum_ja_j\xi_{ij}\ge0$ \as{}
(meaning that the total activity never decreases);
then
the urn is tenable for any $X_0$ with non-zero activity.
This is satisfied in most  applications we know of, but not all; see
\refR{Rgaps-types} for a different example. 
We shall \emph{not} assume this condition in the present paper, unless
explicitly said so.
\end{remark}

\begin{remark}
  \label{Rten2}
In all applications that we know of, each $\xi_i$ is a discrete random
vector, \ie{} it takes only a countable (usually a finite) number of
different values. This is not necessary, however; the results below hold
also if, e.g., some $\xi_{ij}$ is continuous. 
\end{remark}

We assume, for simplicity, that the initial composition $X_0$ is deterministic. 

\begin{remark}
The results
are easily extended to the case of random $X_0$ by conditioning on $X_0$,
but that may require some extra conditions or minor modifications in some of the
statements, which we leave to the reader.  
\end{remark}

The \Polya{} urn is \emph{balanced} if 
\begin{equation}\label{balanced}
   \sum_j a_j\xi_{ij} = b>0
\end{equation}
(\as)
for some constant $b$ and every $i$. In other words, the added activity
after each draw is fixed (non-random and not depending on the colour of the
drawn ball). This implies that the 
denominator in \eqref{urn} 
(which is the total activity in the urn)
is deterministic for each $n$, see \eqref{wn1}.
This is a significant simplification, and is assumed in many papers on
\Polya{} urns. 
(One exception is \cite{SJ154}, 
which is based on embedding in a continuous time 
branching process and stopping at a suitable stopping time, 
following \cite{AthreyaKarlin};
this method does not seem to easily give information on moments and is not
used in the present paper.)

\begin{remark}
We exclude the case $b=0$, which is quite different;
a typical example is a Markov chain, 
regarded as an urn always containing a single ball.
\end{remark}

We shall assume that the urn is tenable and balanced;
this is sometimes repeated for emphasis.

We also assume \eqref{gl1b} below; as discussed in \refR{Rgl1b} and
\refApp{Aten}, this is a 
very weak assumption needed to exclude some trivial cases allowed by our
definition of tenable; by \refL{Lapp} it is sufficient to assume that every
colour in the specification actually may occur in the urn, which always can
be achieved by eliminating any redundant colours.

Finally, in order to obtain moment results,
we assume that the replacements have second moments:
\begin{equation}\label{Exi2}
  \E \xi_{ij}^2<\infty,\qquad i,j=1,\dots,q.
\end{equation}
It follows that every $X_n$ has second moments, so the covariance matrix
$\Var(X_n)$ is finite for each $n$.

\begin{remark}
  In the tenable and balanced case, 
the assumption \eqref{Exi2} is almost redundant.
First, although there might be negative values of $\xi_{ij}$, 
we assume that the urn is tenable. Hence, given any
instance $(x_1,\dots,x_q)$ of the urn that may occur with positive probability
as some $X_n$, we have $\xi_{ij}\ge -x_j$ \as{}
for every $i$ and $j$ such that $a_ix_i>0$. 
In particular, if every $a_i>0$, and every colour may appear in the urn,
then each $\xi_{ij}$ is bounded below. 
Furthermore, still assuming $a_i>0$ for each $i$, this and \eqref{balanced}
implies that each $\xi_{ij}$ also is bounded above; hence $\xi_{ij}$ is
bounded and has moments of any order.
\end{remark}

\subsection{Notation}
We regard all vectors as column vectors.
We use standard notations for (real or complex)
vectors and matrices (of sizes $q$ and
$q\times q$, respectively);
in particular ${}'$ for transpose,
${}^*$ for Hermitean conjugate and
$\cdot$ for the standard scalar product; thus $u\cdot v=u'v$ for any
vectors $u,v\in\bbR^q$. 
We let $\norm{\,}$ denote the standard Euclidean norm for vectors, and 
the operator norm (or any other convenient norm) 
for matrices. 

Let $a:=(a_1,\dots,a_q)'$ be the vector of activities.
Thus, the balance condition \eqref{balanced} can be written $a\cdot\xi_i=b$.

The \emph{intensity matrix} of the \Polya{} urn is
the $ q\times q $  matrix
\begin{equation}\label{A}
A:=(a_j\E\xi_{ji})_{i,j=1}^{q}.   
\end{equation}
(Note that, for convenience and following \cite{SJ154}, we have defined $A$
so that the element $(A)_{ij}$ is a measure of the intensity of adding balls
of colour $i$ coming from drawn balls of colour $j$; the transpose matrix
$A'$ is often used in other papers.)
The intensity matrix $ A $ with its eigenvalues and
eigenvectors has a central role for  asymptotical results.

Let $\gs(A)$ (the \emph{spectrum} of $A$) be the set of eigenvalues of $A$.

We shall use the Jordan decomposition of the matrix $A$ in the
following form. %, see \eg{} \cite[Theorem 7.6]{Nomizu}:
There exists a decomposition of the {complex} space $\bbC^q$ as a
direct sum $\bigoplus_\gl \el$ of generalized eigenspaces $\el$, such
that $A-\gl I$ is a nilpotent operator on $\el$;
here $\gl$ ranges over the set $\gs(A)$ of eigenvalues of $A$.
($I$ is the identity matrix of appropriate size.)
In other words, there exist projections $\pl$, $\gl\in\gs(A)$, 
that commute with $A$ and satisfy
\begin{gather}
\sum_{\gl\in\gsa}\pl=I,
\label{pl}\\
A\pl=\pl A=\gl\pl+\nl,
\label{24b}
\end{gather}
where $\nl=\pl\nl=\nl\pl$ is nilpotent. 
Moreover, $\pl P_\mu=0$ when $\gl\neq\mu$.
%\begin{equation}\label{pp}
%  P_\gl P_\mu=0,\qquad \gl\neq\mu.
%\end{equation}
We let 
$\nul\ge0$ be the integer such that $\nl^{\nul}\neq0$ but $\nl^{\nul+1}=0$.
(Equivalently, in the Jordan normal form of $A$, the largest Jordan
block with $\gl$ on the diagonal has size $\nul+1$.) 
Hence $\nul=0$ if and only if $\nl=0$, and this happens for all $\gl$
if and only if $A$ is diagonalizable, \ie{} if and only 
if $A$ has a complete set of $q$ linearly independent eigenvectors.
(In the sequel, $\gl$ will always denote an eigenvalue. We may for
completeness define $\pl=\nl=0$ for every $\gl\notin\gsa$.)

The eigenvalues of $A$ are denoted $\gl_1,\dots,\gl_q$ (repeated according
to their algebraic multiplicities); we assume that they are ordered 
with decreasing real parts: $\Re\gl_1\ge\Re\gl_2\ge\dots$, and furthermore,
when the real parts are equal, in order of decreasing $\nu_j:=\nu_{\gl_j}$. 
In particular, 
if $\gl_1>\Re\gl_2$, then $\nu_j\le\nu_2$ for every eigenvalue $\gl_j$
with $\Re\gl_j=\Re\gl_2$.

Recall that the urn is called \emph{small} if $\Re\gl_2\le\frac12\gl_1$
and \emph{large} if $\Re\gl_2>\frac12\gl_1$; 
the urn is \emph{strictly small}
if $\Re\gl_2<\frac12\gl_1$.

In the balanced case, by \eqref{A} and \eqref{balanced},
\begin{equation}\label{bala}
  a'A
=\Bigpar{\sumiq a_i(A)_{ij}}_j
=\Bigpar{\sumiq a_ia_j\E \xi_{ji}}_j
=\bigpar{a_j\E (a\cdot \xi_{j})}_j
=ba',
\end{equation}
\ie, $a'$ is a left eigenvector of $A$ with eigenvalue $b$.
Thus $b\in\gsa$.
We shall assume that, moreover, $b$ is the largest eigenvalue, \ie,
\begin{equation}\label{gl1b}
  \gl_1=b.
\end{equation}

\begin{remark}\label{Rgl1b}
In fact, \eqref{gl1b} is a very weak assumption.
For example,
if each $\xi_{ij}\ge0$, then $A$ is a matrix with non-negative elements, and
since the eigenvector $a'$ is non-negative,
\eqref{gl1b} is a consequence of
the Perron--Frobenius theorem.
The same holds (by considering $A+cI$ for a suitable $c>0$) under the
assumption in \refR{Rten}. Under our, more general, definition of tenability,
there are counterexamples, see \refE{Ebad},
but we show in \refApp{Aten} that they are so only in a trivial way, and
that we may assume \eqref{gl1b} without real loss of generality.
(Of course, the proof of \refL{Lapp}, which uses \refL{L1}, does not use the
assumption \eqref{gl1b}.)
\end{remark}

We shall in our theorems furthermore assume that
$\Re\gl_2<\gl_1$ (and often more), and thus that
$\gl_1=b$ is a simple eigenvalue.
There are thus corresponding left and right eigenvectors
$u_1'$ and $v_1$ that are unique up to normalization.
By \eqref{bala}, we may choose $u_1=a$. Furthermore, we let $v_1$ be
normalized by
\begin{align}
\label{normalized}
%a\cdot v_1=1&&& u_1\cdot v_1=1.
u_1\cdot v_1=a\cdot v_1=1.
\end{align}
Then the projection $P_{\gl_1}$ is given by
\begin{equation}
  \label{pl1}
P_{\gl_1}=v_1u_1'.
\end{equation}
Consequently, 
in the balanced case, for any vector $v\in\bbR^q$,
\begin{equation}\label{p1}
  P_{\gl_1}v=v_1u_1'v=v_1a'v=(a\cdot v)v_1.
\end{equation}

\begin{remark}\label{Rmulti}
The dominant eigenvalue
$\gl_1$ is simple, and $\Re\gl_2<\gl_1$ if, for example,
the matrix $A$ is irreducible, but not in general. A simple counterexample
is the original \Polya{} urn, see 
\citet{Markov1917},
\citet{EggPol} and \citet{Polya} (with $q=2$),
where each ball is replaced together with $b$ balls of the same colour (and
every $a_i=1$); then $A=bI$ and $\gl_1=\dots=\gl_q=b$.
As is well-known, the asymptotic behaviour in this case is quite different in
this case; in particular, $X_n/n$ converges in distribution to a
non-degenerate distribution and not to a constant, see \eg{} \cite{Polya} and
\cite{JKotz}. 
\end{remark}

Define also
\begin{equation}
  \PIx:=\sumglx P_\gl = I-P_{\gl_1},
\end{equation}

Furthermore, define the symmetric matrix
\begin{equation}\label{B}
  B:=\sumiq a_i v_{1i}\E\bigpar{\xi_i\xi_i'}
\end{equation}
and, 
if the urn is \emph{strictly small}, 
%\ie{} if\/ $\Re\gl_2<\frac12\gl_1$,
noting that $\PIx$ commutes with $e^{sA}:=\sumko (sA)^k/k!$,
\begin{equation}\label{gSI}
  \gS_I:=\intoo \PIx e^{sA} B e^{sA'} \PIx' e^{-\gl_1 s} \dd s.
\end{equation}
This integral converges absolutely when the urn is strictly small, 
as can be seen from the proof of \refT{TV1}, or directly
because 
$\norm{ \PIx e^{sA}}=O\bigpar{s^{\nu_2}e^{\Re\gl_2s}}$ for $s\ge1$, 
as is easily seen from
\refL{LT}. 
(The integral is matrix-valued; 
the space of $q\times q$ matrices is a finite-dimensional space and the
integral can be interpreted component-wise.)
See also \refApp{SSnote}.

Unspecified limits are as \ntoo. As usual, $a_n=O(b_n)$ means that $a_n/b_n$
is bounded; 
here $a_n$ may be vectors or matrices and $b_n$ may be complex numbers;
we do not insist that $b_n$ is positive.

$\ceil{x}$ is the smallest integer $\ge x$.

\section{Main results}\label{Sresults}

Our main results on asymptotics of mean and variance are the following.
Proofs are given in \refS{Spf2}.
As said in the introduction, 
results of this type exist, mainly implicitly, in earlier work.
In particular, under similar (but not identical) assumptions,
\refT{TE} is implicit in \citet[Theorem 3.2]{BaiHu2005} and
its proof, and explicit
in \citet[Proposition 7.1]{P};
%\cite[?? p318, Prop 7.1 and Remark (2)?]{P}
\refTs{TV1} and \ref{TV2} are implicit in \citet{BaiHu,BaiHu2005}.
\begin{theorem}
  \label{TE}
If the \Polya{} urn is tenable, balanced and
$\Re\gl_2<\gl_1$, then, for $n\ge2$,
\begin{equation}\label{te}
  \begin{split}
\E X_n &= (n\gl_1 + a\cdot X_0)v_1 + O\bigpar{n^{\Re\gl_2/\gl_1}\log^{\nu_2} n}
\\
&= n \gl_1 v_1 + O\bigpar{n^{\Re\gl_2/\gl_1}\log^{\nu_2} n+1}	
\\
&= n \gl_1 v_1 + o(n).
  \end{split}
\end{equation}
In particular, if the urn is strictly small, 
\ie{} $\Re\gl_2<\frac12\gl_1$, then
\begin{equation}\label{tea}
  \E X_n = n\gl_1 v_1 + o\bigpar{n\qq}.
\end{equation}
\end{theorem}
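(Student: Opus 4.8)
The plan is to reduce everything to the asymptotics of an explicit deterministic matrix product. First I would record that, by the balance condition \eqref{balanced}, conditioning on the colour drawn at step $n+1$ gives $a\cdot X_{n+1}=a\cdot X_n+b$, so the total activity $W_n:=a\cdot X_n=a\cdot X_0+n\gl_1$ is deterministic and, by tenability (which forces $a\cdot X_0>0$), strictly positive. Next, letting $\cF_n$ denote the natural filtration and again conditioning on the drawn colour, \eqref{urn} gives $\E[X_{n+1}\mid\cF_n]=X_n+\sum_i\frac{a_iX_{ni}}{W_n}\E\xi_i$, and by the definition \eqref{A} of $A$ the sum equals $\frac1{W_n}AX_n$; hence
\[
\E[X_{n+1}\mid\cF_n]=\Bigpar{I+\tfrac1{W_n}A}X_n .
\]
Taking expectations and iterating --- and using that the matrices $I+A/W_k$ are polynomials in $A$ and hence commute, so the order is irrelevant --- I obtain
\[
\E X_n=\prod_{k=0}^{n-1}\Bigpar{I+\frac{A}{W_k}}X_0,\qquad W_k=a\cdot X_0+k\gl_1 ,
\]
and the whole problem becomes understanding this product.

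Then I would expand $X_0=\sum_{\gl\in\gsa}P_\gl X_0$ using the Jordan decomposition \eqref{pl}--\eqref{24b} and, since each $P_\gl$ commutes with $A$, analyse the product on each generalized eigenspace $\el$ separately; there $A$ acts as $\gl I+\nl$ with $\nl^{\nul+1}=0$, so the factors become $\bigpar{1+\gl/W_k}I+\nl/W_k$. For the dominant eigenvalue $\gl=\gl_1=b$, which is simple (hence $\nl=0$) because $\Re\gl_2<\gl_1$ and \eqref{gl1b} hold, the scalar factor is $1+\gl_1/W_k=W_{k+1}/W_k$, so the product telescopes to $(W_n/W_0)P_{\gl_1}$; since $P_{\gl_1}X_0=(a\cdot X_0)v_1=W_0v_1$ by \eqref{p1}, this eigenspace contributes exactly $W_nv_1=(n\gl_1+a\cdot X_0)v_1$, which is the main term in \eqref{te}.

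For the remaining eigenvalues $\gl\neq\gl_1$ one has $\Re\gl\le\Re\gl_2<\gl_1=b$, and I would bound the $\el$-contribution by splitting the product at a fixed index $m_0$ beyond which $W_k>2|\gl|$ (the initial block being a fixed bounded matrix). For $k\ge m_0$ the scalar $c_k:=1+\gl/W_k$ is bounded away from $0$, so I write the factor as $c_k\bigpar{I+\nl/(c_kW_k)}$. The scalar product $\prod_{k=m_0}^{n-1}c_k=\prod_{k=m_0}^{n-1}(W_k+\gl)/W_k$ is evaluated via the identity $\prod_{k=0}^{n-1}(c+kb)=b^n\Gamma(c/b+n)/\Gamma(c/b)$, which together with Stirling gives $\prod_{k=m_0}^{n-1}c_k=O\bigpar{n^{\Re\gl/\gl_1}}$. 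The matrices $\nl/(c_kW_k)$ are scalar multiples of the single nilpotent $\nl$ and hence commute, so $\prod_{k=m_0}^{n-1}\bigpar{I+\nl/(c_kW_k)}=\sum_{j=0}^{\nul}\nl^{\,j}e_j$, where $e_j$ is the $j$-th elementary symmetric function of the numbers $1/(c_kW_k)$; since $|c_kW_k|=|W_k+\gl|$ is of order $k$, $|e_j|\le\frac1{j!}\bigpar{\sum_k 1/|c_kW_k|}^{j}=O(\log^{j}n)$, so this part is $O(\log^{\nul}n)$. Thus each $\gl\neq\gl_1$ contributes $O\bigpar{n^{\Re\gl/\gl_1}\log^{\nul}n}$, and summing --- recalling that among the eigenvalues with $\Re\gl=\Re\gl_2$ the largest value of $\nu_\gl$ is $\nu_2$ by our ordering --- the total error is $O\bigpar{n^{\Re\gl_2/\gl_1}\log^{\nu_2}n}$, the first displayed line of \eqref{te}. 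The remaining two lines follow by writing $(a\cdot X_0)v_1=O(1)$ and using $\Re\gl_2<\gl_1$, and \eqref{tea} follows because in the strictly small case $\Re\gl_2/\gl_1<\tfrac12$, so the error is $o(n^{1/2})$; the restriction $n\ge2$ is merely to keep $\log n>0$.

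The main obstacle is the second-order estimate in the last paragraph: one must extract the sharp exponent $n^{\Re\gl/\gl_1}$ from the scalar product --- which needs the Gamma/Stirling computation, not the crude bound $\prod(1+|\gl|/W_k)$ --- while simultaneously pinning the logarithmic factor coming from the nilpotent part at exactly the power $\nul$, and one has to be slightly careful that $c_k$ may vanish for small $k$, which is why the product is cut at $m_0$.
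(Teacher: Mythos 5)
Your proof is correct, and its skeleton is the same as the paper's: reduce to the deterministic identity $\E X_n=\prod_{k<n}(I+A/w_k)\,X_0$, split $X_0=\sum_\gl P_\gl X_0$, get the exact main term $(n\gl_1+a\cdot X_0)v_1$ on the dominant eigenspace by telescoping (the paper's Lemma~\ref{L1b} is literally the same computation, since $f_{i,j}(\gl_1)=(j+w_0)/(i+w_0)$), and bound each $\gl\neq\gl_1$ contribution by $O(n^{\Re\gl/\gl_1}\log^{\nu_\gl}n)$. The genuine difference is in how that last estimate is obtained: you work directly on each generalized eigenspace, factor each factor as a scalar $c_k$ times a unipotent matrix, evaluate $\prod c_k$ by the Gamma identity and Stirling, and control the unipotent product via elementary symmetric functions of the numbers $1/(c_kW_k)$, which pins the $\log^{\nu_\gl}n$ factor; the paper instead writes $F_{0,n}=f_{0,n}(A)$ with $f_{0,n}$ the explicit Gamma ratio \eqref{qib} and uses the spectral formula $f(A)P_\gl=\sum_m \frac{1}{m!}f^{(m)}(\gl)N_\gl^m P_\gl$ (Lemma~\ref{LT}) together with Cauchy estimates for the derivatives (Lemmas~\ref{L0}, \ref{L0m}, \ref{L1}). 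Your device is more elementary and self-contained for the mean; the paper's machinery gives uniform estimates for $F_{i,j}P_\gl$ over all $1\le i\le j$ with explicit leading constants, which it then reuses for the variance theorems (Lemma~\ref{L2}) and for Lemma~\ref{Lapp}, so it is set up once for all. Likewise, you obtain the product formula for $\E X_n$ by iterating the conditional expectation, while the paper derives the same identity \eqref{EX} from the martingale decomposition \eqref{kia}, which it needs anyway for the covariance; for the mean alone the two derivations are equivalent. Your attention to cutting the product at $m_0$ (where $W_k+\gl$ could vanish) and to the ordering convention making $\nu_\gl\le\nu_2$ when $\Re\gl=\Re\gl_2$ covers the points where a sloppier argument would leak.
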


\begin{theorem}
  \label{TV1}
If the \Polya{} urn is tenable, balanced and
strictly small, 
\ie{} $\Re\gl_2<\frac12\gl_1$, then
\begin{equation}\label{tv1}
 n\qw \Var(X_n) \to 
\gS:=
\gl_1\gS_I.
\end{equation}
\end{theorem}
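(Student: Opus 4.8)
The plan is to derive an exact recursion for $\Var(X_n)$ and then pass to the limit after normalization by $n^{-1}$. First I would set up the filtration $\cF_n=\sigma(X_0,\dots,X_n)$ and compute the one-step conditional mean and covariance. Since the urn is balanced, the total activity at time $n$ is deterministic, namely $a\cdot X_n = a\cdot X_0 + nb = a\cdot X_0 + n\gl_1$ by \eqref{balanced} and \eqref{gl1b}; write $w_n:=a\cdot X_n$. Given $\cF_n$, the drawn colour is $i$ with probability $a_iX_{ni}/w_n$, and then $\gD X_n \eqd \xi_i$. Hence
\begin{equation}
  \E[\gD X_n\mid\cF_n] = \sum_i \frac{a_iX_{ni}}{w_n}\E\xi_i = \frac{1}{w_n} A X_n,
\end{equation}
so $\E[X_{n+1}\mid\cF_n] = (I + w_n^{-1}A)X_n$. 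For the second moment, writing $Q_n:=\E\bigpar{X_nX_n'}$, I would compute
\begin{equation}
  \E[\gD X_n(\gD X_n)'\mid\cF_n] = \sum_i \frac{a_iX_{ni}}{w_n}\E\bigpar{\xi_i\xi_i'} =: \frac{1}{w_n} R(X_n),
\end{equation}
where $R(x)=\sum_i a_ix_i\E(\xi_i\xi_i')$ is linear in $x$; note $R(v_1)=B$ by \eqref{B}. Combining, and using $X_{n+1}X_{n+1}' = X_nX_n' + X_n(\gD X_n)' + (\gD X_n)X_n' + (\gD X_n)(\gD X_n)'$, taking conditional expectations and then full expectations gives an exact linear recursion
\begin{equation}\label{Qrec}
  Q_{n+1} = \bigpar{I+w_n\qw A}Q_n\bigpar{I+w_n\qw A'} + w_n\qw R(\E X_n),
\end{equation}
since $R$ is linear. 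This is the decomposition referred to in the introduction (essentially the one in \cite{BaiHu}); $\Var(X_n)=Q_n - \E X_n(\E X_n)'$, and from \refT{TE} the mean part contributes $n^2\gl_1^2 v_1v_1' + O(n^{1+\Re\gl_2/\gl_1}\log^{\nu_2}n)$ to $Q_n$, which must be subtracted off.

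The core of the argument is then to analyze the linear recursion \eqref{Qrec}. I would iterate it: $\Var(X_n)$ is a sum over $k<n$ of terms of the form $T_{k,n}\,\bigpar{w_k\qw S_k}\,T_{k,n}'$, where $T_{k,n}=\prod_{j=k}^{n-1}(I+w_j\qw A)$ is the product of transition operators and $S_k$ is the ``new variance injected at step $k$'', namely the conditional covariance of $\gD X_k$ given $\cF_k$, whose expectation is $R(\E X_k) - (\text{something quadratic in }A X_k/w_k)$ — the orthogonality of the contributions for different $k$ (martingale-difference structure) is what makes this sum clean, and this is exactly the ``contributions are orthogonal'' remark in the introduction. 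The key asymptotic input is the behaviour of the products $T_{k,n}$: since $w_j = jb + O(1) = j\gl_1(1+O(1/j))$, one has, heuristically, $T_{k,n}\approx \exp\bigpar{\gl_1\qw A \sum_{j=k}^{n-1} j\qw} \approx (n/k)^{A/\gl_1}$, so $T_{k,n}\sim (n/k)^{A/\gl_1}$ up to the usual polynomial-in-$\log$ corrections from the nilpotent parts $\nl$; I would make this precise using \refL{LT} and \refL{L1} (the lemmas on $\norm{\PIx e^{sA}}$ and on estimating such products) with the substitution $s=\log(n/k)$. Meanwhile $S_k$, to leading order, equals $B$ scaled by the coefficient of $v_1$ in $\E X_k$, i.e. $S_k = k\gl_1 B + o(k)$ as long as one is careful that the $v_1v_1'$-direction is killed: because $a\cdot\gD X_k = b$ deterministically, the conditional covariance of $\gD X_k$ annihilates $a$ on the left and right, so effectively $\PIx S_k\PIx' $ is what enters, which matches the $\PIx$'s in the definition \eqref{gSI} of $\gS_I$.

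Putting these together, $n\qw\Var(X_n) \approx n\qw \sum_{k<n} k\qw \cdot k\gl_1\, T_{k,n}\PIx B\PIx' T_{k,n}' \approx \gl_1 n\qw \sum_{k<n} (n/k)^{A/\gl_1}\PIx B \PIx' (n/k)^{A'/\gl_1}$, and substituting $k = n e^{-s}$ turns the sum into a Riemann sum converging to $\gl_1\intoo \PIx e^{sA}B e^{sA'}\PIx' e^{-\gl_1 s}\dd s = \gl_1\gS_I = \gS$, which is \eqref{tv1}. The strict-smallness hypothesis $\Re\gl_2<\frac12\gl_1$ is precisely what makes the integrand $O(s^{2\nu_2}e^{(2\Re\gl_2-\gl_1)s})$ integrable at $\infty$, guaranteeing both convergence of $\gS_I$ and — after a dominated-convergence / uniform-integrability argument — that the Riemann sums converge and the error terms (from $w_j\neq j\gl_1$ exactly, from the $o(k)$ in $\E X_k$, and from the tail $k$ near $n$ and near $1$) are $o(n)$ rather than of order $n$. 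The main obstacle, I expect, is exactly this uniform control of the finite-$n$ products $T_{k,n}$ against their continuous analogue $e^{s A/\gl_1}$ uniformly over the whole range $1\le k< n$: near $k=1$ the approximation $\sum_{j=k}^{n-1}j\qw\approx\log(n/k)$ has $O(1)$ error which is harmless, but one needs a clean bound like $\norm{\PIx T_{k,n}} = O\bigpar{(k/n)^{\Re\gl_2/\gl_1}\log^{\nu_2}(n/k)}$ valid for all such $k,n$, which is where \refL{LT}/\refL{L1} do the real work; everything else is bookkeeping with the linear recursion.
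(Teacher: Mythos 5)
Your plan is essentially the paper's proof. The orthogonal decomposition you arrive at is exactly \eqref{kia}/\eqref{varx} with $Y_\ell:=\gD X_{\ell-1}-\E\bigpar{\gD X_{\ell-1}\mid\cF_{\ell-1}}$ and $F_{i,j}=\prod_{i\le k<j}(I+w_k\qw A)$; the product asymptotics $F_{k,n}\approx (n/k)^{A/\gl_1}$ are Lemmas \ref{L0m}--\ref{L3}; and the passage to the limit is the same Riemann-sum/dominated-convergence argument with the substitution $x=e^{-s}$, strict smallness providing the integrable dominating bound (the paper's \eqref{jul3}). Your entry point (a recursion for $Q_n=\E(X_nX_n')$) differs only superficially from the paper's direct iteration of $X_{n+1}=(I+w_n\qw A)X_n+Y_{n+1}$.

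One concrete error to repair: your displayed recursion for $Q_n$ is not exact as claimed. Expanding $(I+w_n\qw A)Q_n(I+w_n\qw A')$ creates an extra $w_n\qww AQ_nA'$, so the correct recursion is
\begin{equation*}
  Q_{n+1}=\bigpar{I+w_n\qw A}Q_n\bigpar{I+w_n\qw A'}+w_n\qw R(\E X_n)-w_n\qww AQ_nA'.
\end{equation*}
The omitted term is not lower order: since $Q_n\sim n^2v_1v_1'$, it is precisely what cancels the $\gl_1$-eigendirection $\gl_1^2v_1v_1'$ of the injected variance (cf.\ \refApp{SSnote}); without it $n\qw\Var(X_n)$ would diverge, because $\intoo e^{sA}v_1v_1'e^{sA'}e^{-\gl_1 s}\dd s=\infty$. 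Your subsequent prose does fix this implicitly --- you take the injected variance to be the conditional covariance of $\gD X_k$, which annihilates $a$ on both sides and hence already carries the $\PIx$'s --- and that is exactly the paper's $\E(Y_kY_k')$, whose limit $B-v_1v_1'$ is \refL{LYlim} (its proof needs the a priori bound $\Var(X_k)=o(k^2)$ of \refL{LV}, which your uniform product bound also supplies, so there is no circularity). Two further slips to correct in a write-up: the product bound should read $\norm{F_{k,n}\PIx}=O\bigpar{(n/k)^{\Re\gl_2/\gl_1}(1+\log(n/k))^{\nu_2}}$, i.e.\ with $n/k$ rather than $k/n$ (\refL{L2}); and $w_k\approx\gl_1 k$, not $k$, which is where your heuristic prefactor picks up a spurious $\gl_1$ --- done consistently, $w_k\qw S_k\to\PIx B\PIx'$ and $n\qw\sum_k(\cdot)\to\int_0^1 x^{-A/\gl_1}\PIx B\PIx' x^{-A'/\gl_1}\dd x=\gl_1\gS_I$, which is the claimed $\gS$.
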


\begin{theorem}
  \label{TV2}
If the \Polya{} urn is tenable, balanced and
small but not strictly small, 
\ie{}  $\Re\gl_2=\frac12\gl_1$, then
\begin{equation*}
 (n\log^{2\nu_2+1}n)\qw \Var(X_n) \to 
 \frac{\gl_1^{-2\nu_2}}{(2\nu_2+1)(\nu_2!)^2}
\sum_{\Re\gl=\frac12\gl_1}
N_\gl^{\nu_2}P_\gl {B} P_\gl^*\xpar{N_\gl^*}^{\nu_2}
.
\end{equation*}
\end{theorem}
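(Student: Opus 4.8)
The plan is to follow the same route as for Theorem~\ref{TV1}, namely to write $X_n$ as a sum of orthogonal one-step contributions as suggested in the introduction (equation~\eqref{kia}), compute the variance as the sum of the variances of these contributions, and then extract the dominant term. Writing $W_n:=\sum_i a_iX_{ni}=b n + a\cdot X_0$ for the (deterministic, by balance) total activity, one introduces for each step $k$ the martingale-difference type increment $\Delta_k$ obtained from $\Delta X_k$ by subtracting its conditional mean, and then ``propagates'' it forward: the genuine effect at time $n$ of the randomness injected at time $k+1$ is $\Phi_{n,k}\Delta_k$ for a suitable (random only through the colour drawn at step $k+1$) vector, where $\Phi_{n,k}$ is essentially a product of the transition operators $\prod_{j=k+1}^{n-1}\bigl(I+\tfrac{1}{W_j}A\,\cdot\bigr)$ acting appropriately. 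The decomposition $X_n-\E X_n=\sum_{k=0}^{n-1}\Phi_{n,k}\Delta_k$ has orthogonal terms, so $\Var(X_n)=\sum_k \E\bigl[\Phi_{n,k}\Cov(\Delta_k\mid\mathcal F_k)\Phi_{n,k}'\bigr]$. All of this is already set up in the proof of Theorem~\ref{TV1}; here one only needs to redo the asymptotic evaluation of the sum in the critical regime $\Re\gl_2=\tfrac12\gl_1$.

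The key analytic input is the estimate of $\prod_{j=k+1}^{n-1}(I+W_j^{-1}A)$, which, since $W_j=bj+O(1)=\gl_1 j+O(1)$, behaves like $(n/k)^{A/\gl_1}$ up to lower-order corrections; more precisely, on the complementary space this product restricted to the eigenvalue $\gl$ is $\bigl(\tfrac{n}{k}\bigr)^{\gl/\gl_1}$ times a polynomial in $\log(n/k)$ of degree $\le\nu_\gl$ coming from the nilpotent part $N_\gl$ — this is exactly the content of Lemma~\ref{LT}. Since $\Cov(\Delta_k\mid\mathcal F_k)$ has conditional mean (over the drawn colour) converging, after dividing by $W_k\sim\gl_1 k$, to $\sum_i a_i v_{1i}\E(\xi_i\xi_i')=B$ — here one uses that $X_k/k\to v_1$ in the relevant sense and that the drawn-colour probabilities are $a_iX_{ki}/W_k$ — the $k$-th term of the sum is of order
\begin{equation*}
 k \cdot \Bigl(\tfrac{n}{k}\Bigr)^{2\Re\gl_2/\gl_1}\log^{2\nu_2}\!\tfrac{n}{k}
 = k\cdot \tfrac{n}{k}\,\log^{2\nu_2}\!\tfrac{n}{k}
 = n\log^{2\nu_2}\!\tfrac{n}{k},
\end{equation*}
using $2\Re\gl_2=\gl_1$. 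Summing over $k=1,\dots,n-1$ contributes an extra factor $\int_1^n \log^{2\nu_2}(n/k)\,dk \sim \frac{1}{2\nu_2+1}\,n\log^{2\nu_2+1}n$ — the integral is dominated by small $k$, i.e.\ early draws, which is why only $P_\gl$ (the leading behaviour of the propagator, not $e^{sA}$-type corrections as in the strictly small case) appears. Carefully tracking the matrix factors: each $\Phi_{n,k}$ acting on the $\gl$-component produces $\gl_1^{-\nu_2}(\log(n/k))^{\nu_2} N_\gl^{\nu_2}P_\gl/\nu_2!$ to leading order (lower powers of $\log$ are negligible after summation), and the middle factor $\Cov(\Delta_k\mid\mathcal F_k)/W_k\to B$ contributes $\gl_1 k\cdot k^{-1}$... collecting the powers of $\gl_1$ (one $\gl_1^{-\nu_2}$ from each side, and the leftover $\gl_1$ from $W_k\sim\gl_1 k$ against the $k$ in $dk/k\cdot k$) gives exactly the stated constant $\gl_1^{-2\nu_2}/\bigl((2\nu_2+1)(\nu_2!)^2\bigr)$ and the matrix $\sum_{\Re\gl=\frac12\gl_1} N_\gl^{\nu_2}P_\gl B P_\gl^{*}(N_\gl^{*})^{\nu_2}$, with the sum over distinct eigenvalues on the critical line $\Re\gl=\tfrac12\gl_1$ (cross terms between $P_\gl$ and $P_\mu$ with $\gl\ne\mu$ oscillate, being $(n/k)^{(\gl+\bar\mu)/\gl_1}$ with imaginary exponent, and vanish after the $k$-sum — or rather, contribute lower order).

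The main obstacle, as in Theorem~\ref{TV1}, is controlling the error terms uniformly in $k$: the approximations $W_j=\gl_1 j+O(1)$, $\prod(I+W_j^{-1}A)\approx (n/k)^{A/\gl_1}$, and $\Cov(\Delta_k\mid\mathcal F_k)\approx \gl_1 k B$ each carry errors that must be shown to sum to $o(n\log^{2\nu_2+1}n)$. The first two are deterministic and handled by Lemma~\ref{LT} together with the standard estimate $\prod_{j=k+1}^{n-1}(1+\gl_1/W_j) = (n/k)\bigl(1+O(1/k)\bigr)$; the genuinely probabilistic error is in replacing $X_{ki}/W_k$ by $v_{1i}$ inside $\Cov(\Delta_k\mid\mathcal F_k)$, which requires the first-moment bound of Theorem~\ref{TE} (giving $\E X_k = \gl_1 k v_1 + O(k^{\Re\gl_2/\gl_1}\log^{\nu_2}k)$, an error that is $o(k)$) plus a crude second-moment bound on $X_k$ to control the fluctuation — and one must check this error, weighted by the propagator of size $\le (n/k)^{2\Re\gl_2/\gl_1}\log^{2\nu_2}(n/k)=(n/k)\log^{2\nu_2}(n/k)$ and summed, still yields $o(n\log^{2\nu_2+1}n)$. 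Since the dominant contribution comes from $k$ of order up to $n^{1-\epsilon}$ (in fact from all scales, logarithmically), a term-by-term $o$ of the leading order, uniform on dyadic blocks, suffices; assembling these uniform bounds is the technical heart of the argument.
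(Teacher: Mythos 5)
Your plan follows the paper's route: the orthogonal decomposition \eqref{gw} and the variance formula \eqref{varx}, projection onto generalized eigenspaces with only the eigenvalues on the critical line $\Re\gl=\tfrac12\gl_1$ surviving, the top $\log^{\nu_2}$ term coming from the nilpotent part of the propagator (Lemma~\ref{L2}, estimate \eqref{tedeum}), the limit $P_\gl\E(Y_kY_k')\to P_\gl B$ (Lemma~\ref{LYlim}), and the removal of pairs $\mu\neq\bar\gl$ by oscillation (Riemann--Lebesgue), which is exactly how the paper proceeds via the substitution $i=\ceil{n^y}$ in \eqref{benedictus}--\eqref{miseri}. Your closing remarks about uniform error control correspond to \eqref{jul2} and the dominated-convergence step.

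However, the central quantitative step is wrong as written, in two places that only accidentally compensate. First, the one-step noise $Y_{k+1}=\gD X_k-\E(\gD X_k\mid\cF_k)$ has \emph{bounded} covariance by \eqref{Exi2}: after the paper's reduction to $b=\gl_1=1$ it converges to $B-v_1v_1'$ (Lemma~\ref{LYlim}); it is not of order $W_k\sim\gl_1 k$. Your claim that $\Cov(\Delta_k\mid\cF_k)/W_k\to B$ (the conditional drawing probabilities $a_iX_{ki}/W_k$ already carry the factor $1/W_k$), and the resulting extra factor $k$ in the per-term estimate $k\cdot(n/k)\log^{2\nu_2}(n/k)$, are incorrect; the correct per-term size is $(n/k)\log^{2\nu_2}(n/k)$, cf.\ \eqref{jul1} with $\Re\gl=\Re\mu=\tfrac12$. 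Second, the evaluation $\int_1^n\log^{2\nu_2}(n/k)\,\dd k\sim\frac{1}{2\nu_2+1}\,n\log^{2\nu_2+1}n$ is false: that integral is $\sim(2\nu_2)!\,n$. The constant comes from $\int_1^n\log^{2\nu_2}(n/k)\,\frac{\dd k}{k}=\frac{1}{2\nu_2+1}\log^{2\nu_2+1}n$, the $1/k$ being supplied by the factor $n/k$ from the two propagators against the overall $n$; in the paper this is the integral $\int_0^1(1-y)^{2\nu_2}\dd y=(2\nu_2+1)^{-1}$ in \eqref{kyrie}. Followed literally, your bookkeeping gives order $n^2$, and the ``leftover $\gl_1$'' you collect from $W_k\sim\gl_1 k$ would insert a spurious factor $\gl_1$ into the constant, which is not in the theorem: the $\gl_1^{-2\nu_2}$ arises solely from the two factors $N_\gl/\gl_1$ in $(n/k)^{A/\gl_1}$. (Also, the mass is not concentrated on early draws: it is spread over all polynomial scales $k=n^y$ with weight $(1-y)^{2\nu_2}\dd y$.) So the structure of your argument matches the paper, but the computation yielding the normalization $n\log^{2\nu_2+1}n$ and the constant must be redone with the bounded covariance and the correct $k$-sum.
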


\begin{remark}\label{RTV}
  Under some additional assumptions (irreducibility of $A$, at least if we
  ignore colours with activity 0, and, for example,
  the condition in \refR{Rten}), 
\cite[Theorems 3.22--3.23 and Lemma 5.4]{SJ154} show
that if the urn is small, 
then $X_n$ is asymptotically normal, with the asymptotic covariance matrix
equal to the limit in Theorem \ref{TV1} ($\Re\gl_2<\frac12\gl_1$) or
or \refT{TV2} ($\Re\gl_2=\frac12\gl_1$). For example, in the strictly small
case,
$n\qqw(X_n-n\gl_1v_1)\dto N(0,\gS)$.
Hence (under these hypotheses), Theorems \ref{TE}--\ref{TV2} can be
summarized by saying that the mean and (co)variances converge as expected in
these central limit theorems.
\end{remark}

We also obtain the following version of the law of large
numbers for \Polya{} urns. 
Convergence \as{} has been shown before under various assumptions
(including the unbalanced case as well as time-inhomogenous
generalizations), 
see \cite[Section V.9.3]{AN}, 
\cite{AldousFP}, 
\cite{BaiHuR}, %Th 1
\cite[Theorem 2.2]{HuZhang2004a},
\cite[Theorem 3.21]{SJ154}, 
\cite[Theorem 2.2]{BaiHu2005}
and is included here for
completeness and because our conditions are somewhat more general; 
the $L^2$ result is in
\cite[Remark 7.1(2)]{P}.

\begin{theorem}\label{TL2}
If the \Polya{} urn is tenable, balanced and
$\Re\gl_2<\gl_1$, then, as \ntoo,
$X_n/n\to \gl_1v_1$ \as{} and in $L^2$. 
\end{theorem}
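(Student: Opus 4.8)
The plan is to reduce the claim to a bound on the ``non-dominant'' part $\PIx X_n$, obtain the $L^2$ convergence directly from the first and second moment estimates that are already available, and then upgrade to almost sure convergence by a subsequence argument. First I would exploit that in the balanced case the total activity is deterministic: since $a\cdot\gD X_k=b$ at every step, $a\cdot X_n=a\cdot X_0+nb=a\cdot X_0+n\gl_1=:w_n$. By \eqref{p1} this gives $P_{\gl_1}X_n=(a\cdot X_n)v_1=w_n v_1$, which is deterministic, and $w_n-n\gl_1=a\cdot X_0$ is bounded, so $P_{\gl_1}X_n/n\to\gl_1 v_1$. Since $X_n=P_{\gl_1}X_n+\PIx X_n$ and $\PIx v_1=0$, it remains only to prove that $\PIx X_n/n\to 0$ in $L^2$ and almost surely.

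For the $L^2$ statement I would write $\E\norm{X_n-n\gl_1 v_1}^2=\norm{\E X_n-n\gl_1 v_1}^2+\Tr\Var(X_n)$. The first term is $O\bigpar{n^{2\Re\gl_2/\gl_1}\log^{2\nu_2}n+1}=o(n^2)$ by \refT{TE}. For the second term I claim $\Var(X_n)=o(n^2)$ whenever $\Re\gl_2<\gl_1$: in the small cases this is immediate from \refT{TV1} and \refT{TV2}, while in the large case the orthogonal decomposition \eqref{kia} used in the proof of \refT{TV1}, together with the estimate $\norm{\PIx e^{sA}}=O\bigpar{s^{\nu_2}e^{\Re\gl_2 s}}$ (from \refL{LT}) for the propagators and the observation that the $P_{\gl_1}$-component of every contribution is deterministic (again because $a\cdot\gD X_k=b$), yields $\Var(X_n)=O\bigpar{n^{\max(1,\,2\Re\gl_2/\gl_1)}\log^{2\nu_2+1}n}$, which is $o(n^2)$ since $\Re\gl_2<\gl_1$. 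Hence $\E\norm{X_n/n-\gl_1 v_1}^2\to 0$.

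For the almost sure statement, set $\theta:=\max\bigpar{\tfrac12,\Re\gl_2/\gl_1}<1$; combining \refT{TE} with the variance bound gives $\E\norm{X_k}^2=O(k^2)$, hence $\norm{X_k}_{L^2}=O(k)$, and also $\E\norm{\PIx X_n}^2=O\bigpar{n^{2\theta}\log^{2\nu_2+1}n}$. Fix $c$ with $c(2-2\theta)>1$ and put $n_j:=\ceil{j^c}$; then $\sum_j\E\norm{\PIx X_{n_j}}^2/n_j^2<\infty$, so Chebyshev's inequality and Borel--Cantelli give $\PIx X_{n_j}/n_j\to 0$ \as{} To fill the gaps, for $n_j\le n\le n_{j+1}$ I would split $X_n-X_{n_j}=\cM_n+\cD_n$, where $(\cF_n)$ is the natural filtration, $\cD_n:=\sum_{k=n_j}^{n-1}\E\bigsqpar{\gD X_k\mid\cF_k}=\sum_{k=n_j}^{n-1}w_k\qw A X_k$ is predictable, and $\cM_n$ is a martingale whose increments are orthogonal and bounded in $L^2$ by $\max_i\E\norm{\xi_i}^2<\infty$ (using \eqref{Exi2}). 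Doob's inequality gives $\E\bigsqpar{\max_{n_j\le n\le n_{j+1}}\norm{\cM_n}^2}=O(n_{j+1}-n_j)$, and Minkowski's inequality with $\norm{X_k}_{L^2}=O(k)$ and $w_k\asymp k$ gives $\E\bigsqpar{\max_{n_j\le n\le n_{j+1}}\norm{\cD_n}^2}=O\bigpar{(n_{j+1}-n_j)^2}$; hence $\E\bigsqpar{\max_{n_j\le n<n_{j+1}}\norm{X_n-X_{n_j}}^2}=O\bigpar{(n_{j+1}-n_j)^2}=O\bigpar{n_j^{2-2/c}}$. A second Borel--Cantelli argument (the relevant probabilities are $O(j^{-2})$) shows $\max_{n_j\le n<n_{j+1}}\norm{X_n-X_{n_j}}/n_j\to 0$ \as{}, and together with $\PIx X_{n_j}/n_j\to 0$ and $n/n_j\to 1$ this yields $\PIx X_n/n\to 0$ \as{}, completing the proof.

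I expect the real obstacle to be exactly this last step: when $\Re\gl_2\ge\tfrac12\gl_1$ the bound $\E\norm{\PIx X_n}^2=O\bigpar{n^{2\theta}\log^{2\nu_2+1}n}$ is not summable after division by $n^2$, so a direct Borel--Cantelli argument along the full sequence fails, and one is forced to pair a polynomially spaced subsequence with the increment estimate and make the two $L^2$ bounds cooperate. A variant closer in spirit to the rest of the paper would instead feed \eqref{kia} directly into a martingale maximal inequality: for each $\gl\neq\gl_1$ the $P_\gl$-component of $X_n$ is a fixed matrix of norm $O\bigpar{n^{\Re\gl/\gl_1}\log^{\nu_\gl}n}$ applied to a martingale partial sum, which a standard argument shows is $o(n)$ \as{}; summing over $\gl$ then gives $\PIx X_n=o(n)$ almost surely.
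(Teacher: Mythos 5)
Your argument is correct, and the $L^2$ half is in substance the same as the paper's: Theorem~\ref{TE} for the mean plus the bound $\Var(X_n)=o(n^2)$, which you re-derive from the orthogonal decomposition \eqref{kia} exactly as the paper does in Lemma~\ref{LV} (one small imprecision: the propagators in \eqref{kia} are the discrete products $F_{\ell,n}$, so the estimate you need is the one for $F_{i,n}P_\gl$ from Lemma~\ref{L2}, not the continuous-time bound $\norm{\PIx e^{sA}}=O(s^{\nu_2}e^{\Re\gl_2 s})$; the conclusion is the same). For the almost sure half your route is genuinely different. The paper observes that $F_{n,N}X_n=\E(X_N\mid\cF_n)$ is a closed martingale, subtracts the deterministic drift via Lemma~\ref{L1b}, applies Doob's maximal inequality on dyadic blocks $[N/2,N]$ together with the inverse bound of Lemma~\ref{LB}, and sums over $N=2^k$ using the rate $\E\norm{X_N-(N+w_0)v_1}^2=O(N^{2-2\eps})$; this exploits the exact algebraic structure and gives maximal control in one stroke. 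You instead run Chebyshev--Borel--Cantelli along the polynomial subsequence $n_j=\ceil{j^c}$ and fill the gaps with a within-block oscillation estimate, splitting the increments into a martingale part (Doob, $O(n_{j+1}-n_j)$) and a predictable drift part (Minkowski with $\norm{X_k}_{L^2}=O(k)$, $O((n_{j+1}-n_j)^2)$), which after dividing by $n_j^2$ gives summable probabilities $O(j^{-2})$. Both arguments are valid under the stated hypotheses; yours is more elementary and does not need Lemmas~\ref{L1b} and~\ref{LB} or the martingale identity, at the price of the subsequence bookkeeping and a cruder treatment of the drift, while the paper's version is shorter once those lemmas are in place and yields the maximal inequality directly at every scale. (Your closing alternative sketch -- writing $P_\gl X_n$ as a fixed matrix applied to a martingale sum -- is essentially Pouyanne's approach and would need more care, since $F_{\ell,n}P_\gl$ depends on $\ell$ and the compensating factors $(F_{0,\ell}|_{E_\gl})^{-1}$ produce a martingale whose variance grows; but as you present it only as an aside, it does not affect the correctness of your main proof.)
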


The asymptotic covariance matrix $\gS$ in \eqref{tv1} is always singular,
since, by \eqref{gSI}, $\gS=\PIx\gS\PIx'$ and thus 
$u'\gS u=u'\PIx\gS\PIx'u=0$ when $\PIx'u=0$, which happens when
$P_{\gl_1}' u=u$, \ie, when $u$ is a multiple of the left eigenvector $u_1=a$.
In the balanced case, this is easy to see: 
$a\cdot X_n$ is deterministic and thus $\Var(a\cdot X_n)=0$; hence 
$a'\gS a=0$ 
since for any vector $u$, by \eqref{tv1},
\begin{equation}\label{agnus}
n\qw \Var(u\cdot X_n)=n\qw u'\Var(X_n) u \to u'\gS u.
\end{equation}
With an extra assumption, this is the only case when the asymptotic variance
$u'\gS u$ vanishes (cf.\ \cite[Remark 3.19]{SJ154}).
Let $\tA$ be the submatrix of $A$ obtained by deleting all rows and columns
corresponding to colours with activity $a_i=0$.

\begin{theorem}  \label{TS}
Suppose that the \Polya{} urn is tenable, balanced and strictly small, 
\ie{} $\Re\gl_2<\frac12\gl_1$, 
and, furthermore, that 
$\tA$ is irreducible.
If $u\in \bbR^q$, then 
$u'\gS u =0$ if and only if for every $n\ge0$, $\Var(u\cdot X_n)=0$, \ie,
$u\cdot X_n$ is deterministic.
\end{theorem}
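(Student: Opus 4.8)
The plan is to prove the two directions separately, where the ``if'' direction is immediate from \eqref{agnus}: if $u\cdot X_n$ is deterministic for every $n$, then $\Var(u\cdot X_n)=0$ for all $n$, and letting \ntoo{} in \eqref{agnus} gives $u'\gS u=0$. So the real content is the ``only if'' direction: assuming $\tA$ irreducible and $u'\gS u=0$, we must show $\Var(u\cdot X_n)=0$ for every $n$. I would first reduce to the case where all activities are positive; since colours with $a_i=0$ contribute nothing to the drawing probabilities \eqref{urn} and nothing to $a\cdot X_n$, and since $\gS=\PIx\gS\PIx'$ with $\PIx$ built from the spectral data, I expect one can pass to $\tA$ and the corresponding subspace, reducing to the case $\tA=A$, i.e.\ every $a_i>0$.

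Next, from $u'\gS u=0$ and the formula \eqref{gSI}, namely $\gS_I=\intoo \PIx e^{sA} B e^{sA'} \PIx' e^{-\gl_1 s}\dd s$, I would extract pointwise information. Writing $w(s):=\PIx' u$ pushed through $e^{sA'}$, i.e.\ $w(s)=e^{sA'}\PIx' u$, the integrand is $e^{-\gl_1 s} w(s)' B w(s) \ge 0$ (note $B=\sum_i a_i v_{1i}\E(\xi_i\xi_i')$ is positive semidefinite since each $a_i v_{1i}\ge 0$ — here I should check $v_{1i}\ge0$, which follows from Perron--Frobenius type arguments under \eqref{gl1b}, or can be cited). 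Hence $u'\gS u=0$ forces $w(s)'Bw(s)=0$ for all $s\ge0$, i.e.\ $B w(s)=0$, i.e.\ $e^{sA'}\PIx' u\in\ker B$ for all $s\ge 0$. Differentiating repeatedly at $s=0$, this means $\PIx' u$ lies in the intersection $\bigcap_{k\ge0}\ker(B A'^{\,k})$, equivalently $\PIx' u$ lies in the largest $A'$-invariant subspace contained in $\ker B$.

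Now I would translate $\ker B$ into urn language: $B w=0$ with $w$ real means $w'Bw=\sum_i a_i v_{1i}\E\big((w\cdot\xi_i)^2\big)=0$, so $w\cdot\xi_i=0$ a.s.\ for every $i$ with $a_i v_{1i}>0$, hence (using $v_{1i}>0$, $a_i>0$) for every $i$. Thus $w\perp\xi_i$ a.s.\ for all $i$ exactly says the replacement never changes $w\cdot X_n$: $w\cdot X_{n+1}=w\cdot X_n$ deterministically, so $w\cdot X_n=w\cdot X_0$ is deterministic for all $n$ — precisely the conclusion we want, provided $u$ itself (not merely $\PIx' u$) is such a $w$. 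To close the gap between $u$ and $\PIx' u=u-P_{\gl_1}'u=u-(a\cdot v_1)\,\text{(multiple of }a)$: $P_{\gl_1}'u$ is a scalar multiple of $a$ by \eqref{pl1}, and $a\cdot X_n$ is deterministic (balance), so $u\cdot X_n$ is deterministic iff $(\PIx' u)\cdot X_n$ is. Hence it suffices to run the argument with $u$ replaced by $\PIx' u$, i.e.\ we may assume $P_{\gl_1}'u=0$ from the start.

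The remaining issue — and the step I expect to be the main obstacle — is that the argument above only gives $B\,e^{sA'}\PIx' u=0$, i.e.\ membership in the largest $A'$-invariant subspace $W\subseteq\ker B$; I then need to show every $w\in W$ actually satisfies $w\cdot\xi_i=0$ a.s.\ for all $i$, and this is where irreducibility of $A$ enters. The subspace $W^\perp$ (annihilator, viewing things appropriately) is $A$-invariant and contains the range of $B$; if $W$ were a proper nonzero invariant subspace not reducing to the trivial one, irreducibility would be contradicted unless $W$ is spanned by eigenvector structure compatible with $P_{\gl_1}$. Concretely, I would argue: $W$ is $A'$-invariant, $W\ne\{0\}$ would (together with $P_{\gl_1}'u=0$, so $\PIx' u$ has no $\gl_1$-component) force, via irreducibility of $A'$ and the Perron--Frobenius simplicity of $\gl_1=b$, that $W$ must contain vectors outside $\mathrm{span}(a)$ unless $W=\{0\}$; but any nonzero $A'$-invariant subspace avoiding the $\gl_1$-eigendirection, combined with $B|_W=0$, contradicts irreducibility because $B$ encodes the actual (nondegenerate) randomness of the irreducible dynamics. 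Thus $\PIx' u=0$, giving $u\parallel a$, and then $u\cdot X_n$ is deterministic for all $n$, completing the proof. I would write this last algebraic step carefully, possibly invoking a lemma that for an irreducible balanced strictly small urn the only $A'$-invariant subspace on which $B$ vanishes is $\mathrm{span}(a)$ (equivalently, $\{0\}$ after projecting away $\gl_1$), and I anticipate needing the strict-smallness only to guarantee convergence of the integral \eqref{gSI} (already noted in the text), not for this algebraic identification.
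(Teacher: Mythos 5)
Your argument is, up to and including the paragraph where you replace $u$ by $\PIx'u$, essentially the paper's own proof: the ``if'' direction via \eqref{agnus}; non-negativity of the integrand in \eqref{gSI} because $B$ is positive semidefinite; evaluation at $s=0$ giving $u'\PIx B\PIx'u=0$, hence $\E\bigpar{u'\PIx\xi_i}^2=0$ for every $i$ with $a_iv_{1i}>0$; and irreducibility of $\tA$ entering only to guarantee $v_{1i}>0$ whenever $a_i>0$. At that point you are already done: for every colour $i$ with $a_i>0$ (and only such colours are ever drawn), $u\cdot\xi_i=(P_{\gl_1}'u)\cdot\xi_i=(v_1\cdot u)(a\cdot\xi_i)=b\,(v_1\cdot u)$ a.s., a constant not depending on $i$, so $u\cdot\gD X_n$ is a.s.\ constant and $u\cdot X_n$ is deterministic for every $n$. (Your initial reduction to all $a_i>0$ is unnecessary: colours with $a_i=0$ contribute nothing to $B$ and are never drawn.)

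The final paragraph, however, is a genuine error rather than a harmless detour. The theorem does not assert that $u'\gS u=0$ forces $u$ to be a multiple of $a$, and that stronger claim is false: take two colours with activities $1$ and both replacements a.s.\ equal to the deterministic vector $(1,1)$. Then $A=\smatrixx{1&1\\1&1}$ is irreducible, $\gl_1=b=2$, $\gl_2=0$, the urn is strictly small, and $X_n=X_0+n(1,1)$ is deterministic, so $\gS=0$ and \emph{every} $u\in\bbR^2$ satisfies $u'\gS u=0$; yet $u=(1,0)$ is not parallel to $a$, and $\PIx'u=(\tfrac12,-\tfrac12)$ spans a nonzero $A'$-invariant subspace contained in $\ker B$. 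So the lemma you propose (that the only $A'$-invariant subspace on which $B$ vanishes is trivial after projecting away $\gl_1$) cannot be proved, and no such step is needed: membership of $\PIx'u$ in $\ker B$ alone --- which you already have from $s=0$ --- yields $(\PIx'u)\cdot\xi_i=0$ a.s.\ for all $i$ with $a_iv_{1i}>0$ by your own translation, and that is exactly what the theorem's conclusion requires. Delete the last paragraph and finish as indicated above.
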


\begin{remark}\label{Rtri}
If $\tA$ is reducible, then, on the contrary,
$\gS$ is typically more singular. 
As an extreme example, consider a ``triangular'' urn
with two colours, activities $a_i=1$ and deterministic replacements
$\xi_1=(1,0)$, $\xi_2=(1-\gl,\gl)$
for a real $\gl\in(0,1)$. (Starting with one ball of each colour, say.)
Then $A=
\smatrixx{1&1-\gl\\0&\gl}
$.
The eigenvalues are $1$ and $\gl$, so the urn is
strictly small if $\gl<\frac12$. However, $v_1=(1,0)$, and thus \eqref{B} yields
$B=\xi_1\xi_1'=v_1v_1'$, and thus by \eqref{PBP} (or a direct calculation)
$\PIx B =0$, and thus $\gS=\gS_I=0$. 
Theorems \ref{TV1} and \ref{TV2} are still valid, but say only that the
limit is 0. 
In fact, in this example, the proper normalization is $n^\gl$: it follows
from \cite[Theorem 1.3(v)]{SJ169} that
$n^{-\gl}X_{n2}=n^{-\gl}(n+2-X_{n1})\dto W$ for some non-degenerate (and
non-normal) random variable $W$.
Moreover, calculations similar to those in \refS{Spf2} show that 
$\E X_{n2}\sim c_1 n^\gl$ and $\Var X_{n2}\sim c_2 n^{2\gl}$ for some
$c_1,c_2>0$, as shown earlier in
\cite[Example 7.2(2)]{P}. % p319
\end{remark}

\begin{remark}
It is easily seen that $\tA$ is irreducible if and only if 
$v_{1i}>0$ for every $i$ with $a_i>0$.
\end{remark}

\section{Proofs, first steps}\label{Spf1}
Let $I_n$ be the colour of the $n$-th drawn ball, and let
\begin{equation}\label{gDX}
  \gD X_n:=X_{n+1}-X_n
\end{equation}
and
\begin{equation}\label{wn}
  w_n:=a\cdot X_n,
\end{equation}
the total weight (activity) of the urn.
Furthermore, let $\cF_n$ be the $\gs$-field generated by $X_1,\dots,X_n$.
Then, by the definition of the urn,
\begin{equation}\label{pin}
  \P\bigpar{I_{n+1}=j\mid \cF_n} = \frac{a_jX_{nj}}{w_n}
\end{equation}
and, consequently, recalling \eqref{A},
\begin{equation}\label{mia}
  \begin{split}
\E\bigpar{\gD X_n\mid \cF_n}
&=\sumjq \P\bigpar{I_{n+1}=j\mid \cF_n}\E\xi_j	
=\frac{1}{w_n}\sumjq a_j X_{nj}\E\xi_j	
\\&
=\frac{1}{w_n}\Bigpar{\sumjq (A)_{ij} X_{nj}}_i
=\frac{1}{w_n} A X_{n}.
  \end{split}
\end{equation}

Define
\begin{equation}\label{Yn}
  Y_n:=\gD X_{n-1} - \E\bigpar{\gD X_{n-1}\mid \cF_{n-1}}.
\end{equation}
Then, $Y_n$ is $\cF_n$-measurable and, obviously,
\begin{equation}
  \label{eyn}
\E\bigpar{Y_n\mid\cF_{n-1}}=0
\end{equation}
and, by \eqref{gDX}, \eqref{Yn} and \eqref{mia},
\begin{equation}
  X_{n+1}=X_n+Y_{n+1}+w_n\qw A X_n
=\bigpar{I+w_n\qw A}X_n+Y_{n+1}.
\end{equation}
Consequently, by induction, for any $n\ge0$,
\begin{equation}\label{tia}
  X_n=\prod_{k=0}^{n-1}\bigpar{I+w_k\qw A}X_0
  +\sum_{\ell=1}^{n}\prod_{k=\ell}^{n-1}\bigpar{I+w_k\qw A}Y_\ell,
\end{equation}
where (as below) an empty matrix product is interpreted as $I$.

We now use the assumption that the urn is balanced, so $a\cdot\gD X_n=b$ and
thus 
by \eqref{gDX}--\eqref{wn},
$w_n$ is deterministic with
\begin{equation}\label{wn1}
  w_n=w_0+nb,
\end{equation}
where the initial weight $w_0=a\cdot X_0$.
We define the matrix products
\begin{equation}\label{qia}
  F_{i,j}:=\prod_{i\le k<j}\bigpar{I+w_k\qw A},
\qquad 0\le i\le j,
\end{equation}
and write  \eqref{tia} as
\begin{equation}
  \label{kia}
  X_n=F_{0,n} X_0+\sum_{\ell=1}^{n}F_{\ell,n} Y_{\ell}.
\end{equation}
As said in the introduction, we can regard the term $F_{\ell,n}Y_\ell$ as
the real effect on $X_n$ of the $\ell$-th draw, including the expected later
indirect effects.

Taking the expectation we find, since $\E Y_\ell=0$ by \eqref{eyn}, 
and the $F_{i,j}$ and $X_0$ are nonrandom,
\begin{equation}
  \label{EX}
\E X_n=F_{0,n} X_0.
\end{equation}
Hence, \eqref{kia} can also be written
\begin{equation}\label{gw}
  X_n-\E X_n =\sum_{\ell=1}^{n}F_{\ell,n} Y_\ell.
\end{equation}
Consequently, the covariance matrix can be computed as
\begin{equation}\label{dia}
  \begin{split}
\Var(X_n)
&:=	\E\bigpar{(X_n-\E X_n)(X_n-\E X_n)'}
\\&\phantom:
=\E\sum_{i=1}^{n}\sum_{j=1}^{n}
  \bigpar{F_{i,n} Y_i}\bigpar{F_{j,n} Y_j}'
\\&\phantom:
=\sum_{i=1}^{n}\sum_{j=1}^{n} F_{i,n} \E\bigpar{Y_i Y_j'} F_{j,n}'
.  \end{split}
\end{equation}
However, if $i>j$, then 
$\E\bigpar{Y_i\mid \cF_j}=0$ by \eqref{eyn},
and since $Y_j$ is $\cF_{j}$-measurable, we have
\begin{equation}
  \E\bigpar{Y_iY_j'} =
\E\bigpar{\E (Y_i\mid \cF_j)Y_j'}=0.
\end{equation}
Taking the transpose we see that $\E\bigpar{Y_iY_j'}=0$ also when $i<j$.
Hence, all nondiagonal terms vanish in \eqref{dia}, and we find
\begin{equation}\label{varx}
  \begin{split}
\Var(X_n)
=\sum_{i=1}^{n} F_{i,n} \E\bigpar{Y_i Y_i'} F_{i,n}'
.  \end{split}
\end{equation}

The formulas \eqref{EX} and \eqref{varx} form the basis of our proofs, and it
remains mainly to analyse the matrix products $F_{i,j}$.

\begin{remark}
  The formula \eqref{tia} holds for general \Polya{} urns, also when they
  are not balanced. However, in the general case, the total weights $w_k$
  are random, and they are dependent on each other and on the $Y_\ell$, and
it seems difficult to draw any useful consequences from \eqref{tia}; certainly
the arguments above fail because the $F_{i,j}$ would be random.
\end{remark}

\begin{remark}
  As remarked by a referee, since $P_{\gl_1}Y_\ell=0$ by
%=\prod_{i\le k<j}(1+w_k\qw\hA)it follows by \eqref{p1}, or by \eqref{gw}
  \refL{LY0}, 
%that $P_{\gl_1}(X_n-\E X_n)=0$, 
we may also write 
\eqref{gw} as
\begin{equation}\label{gw^}
  X_n-\E X_n =\sum_{\ell=1}^{n} F_{\ell,n}\PIx Y_\ell
=\sum_{\ell=1}^{n} \hF_{\ell,n} Y_\ell,
\end{equation}
where $\hF_{i,j}:=\PIx F_{i,j}=F_{i,j}\PIx=\prod_{i\le k<j}(I+w_k\qw\hA)-P_{\gl_1}$
with $\hA:=\PIx A$.
This could be used instead of \eqref{gw}
to make another version of the proofs below;
the two versions are very similar and essentially equivalent.
(See \cite{HuZhang2004a} for a version essentially of this type.)
The form \eqref{gw^} has the advantage that we have eliminated the (large)
deterministic part corresponding to $P_{\gl_1}$; for example, assuming
$b=1$, we obtain for $1\le\ell\le n$,
$\norm{\hF_{\ell,n}}=O\bigpar{(n/\ell)^{\Re\gl_2}(1+\log(n/\ell))^{\nu_2}}$,
see \refL{L2}. Nevertheless, we prefer to use $F_{i,j}$ in the proofs below.
\end{remark}

\section{Estimates of matrix functions}\label{Smatrix}
In this section we derive some estimates of $F_{i,n}$; these are used in the
next section together with \eqref{varx} to obtain the variance asymptotics.
The estimates of $F_{i,n}$ are obtained by standard matrix calculus
including a Jordan decomposition of $A$.
Similar estimates, have been used in several related
papers, \eg{}
\cite{HuZhang2004a}, % Lemma 3.2 %% (3.4) ej helt korrekt: ska vara 1+log.
\cite{BaiHu2005}, % p. 922ff
\cite{ZhangHuCheung},
\cite{SJ154}. % (2.10)--(2.12)
For completeness we nevertheless give detailed proofs. 

For notational convenience, we make
from now on the simplifying assumption
$b=1$. 
(For emphasis and clarity, we repeat this assumptions in some statements; 
it will always be in force, whether stated or not.)
This is no loss of generality; we can divide all activities by
$b$ and let the new activities be $\hat a:=a/b$; this defines the same
random evolution of the urn and we 
have $\hat a\cdot \xi_{i}=b/b=1$  for every $i$,
so the modified urn is also balanced, with balance $\hat b=1$.
Furthermore, the intensity matrix $A$ in \eqref{A} is divided by $b$, so all
eigenvalues $\gl_i$ are divided by $b$, but their ratios remain the same;
the projections $P_{\gl}$ remain the same
while the nilpotent parts $N_\gl$ are divided by $b$,
and in both cases the indices are shifted;
also, with the normalization \eqref{normalized},
$u_1=a$ is divided by $b$ while $v_1$ is multiplied by $b$.
It is now easy to check that $\gl_1v_1$, $B$ and $\gl_1\gS_I$ 
are invariant,
and thus the theorems all follow from the special case
$b=1$. 
By the assumption \eqref{gl1b}, see \refR{Rgl1b} and \refApp{Aten}, we thus
have $\gl_1=1$.

Note that \eqref{wn1} now becomes
\begin{equation}\label{wn2}
w_n=n+w_0.  
\end{equation}

Note also that \eqref{qia} can be written
$F_{i,j}=f_{i,j}(A)$, where 
$0\le i\le j$ and $f_{i,j}$ is the polynomial
\begin{equation}\label{qib}
  \begin{split}
  f_{i,j}(z)&:=
\prod_{i\le k<j}\bigpar{1+w_k\qw z}
=\prod_{i\le k<j}\frac{w_k+ z}{w_k}
=\prod_{i\le k<j}\frac{k+w_0+ z}{k+w_0}
\\&\phantom:
= \frac{\gG(j+w_0+z)/\gG(i+w_0+z)}	{\gG(j+w_0)/\gG(i+w_0)}
%= \frac{\gG(j+w_0)\gG(i+w_0+z)}{\gG(j+w_0+z)\gG(i+w_0)}	
= \frac{\gG(j+w_0+z)}{\gG(j+w_0)}	
\cdot \frac{\gG(i+w_0)}{\gG(i+w_0+z)}	
.  \end{split}
\end{equation}

Recall that by the functional calculus in spectral theory, see \eg{} 
\cite[Chapter VII.1--3]{DunfordS},
we can define $f(A)$ not only for polynomials $f(z)$ but for any function $f(z)$
that is analytic in a neighbourhood of the spectrum $\gs(A)$.
Furthermore, if $K$ is a compact set 
that contains $\gs(A)$ in its interior
(for example a sufficiently large disc), then there exists a constant $C$
(depending on $A$ and $K$)
such that for every $f$ analytic in a neighbourhood of $K$, 
\begin{equation}\label{nC}
  \norm{f(A)}\le C\sup_{z\in K} |f(z)|.
\end{equation}
We shall use the functional calculus 
mainly for polynomials and the entire functions
$z\mapsto t^z=e^{(\log t)z}$ for fixed $t>0$; in these cases, $f(A)$ can
be defined by a Taylor series expansion as we did before \eqref{gSI}.
Note also that the general theory applies to
operators in a Banach space; we only need the simpler finite-dimensional case
discussed in \cite[Chapter VII.1]{DunfordS}.

We shall use the following formula for $f(A)$, 
where $f\mm$ denotes the $m$-th derivative of $f$.
(The formula can be seen as a Taylor expansion, see the proof.) 
\begin{lemma}
  \label{LT}
For any entire function $f(\gl)$, and any $\gl\in\gs(A)$,
\begin{equation}
  \label{lt}
f(A)P_\gl = \sum_{m=0}^{\nu_\gl}\frac{1}{m!} f\mm(\gl)N_\gl^mP_\gl.
\end{equation}
\end{lemma}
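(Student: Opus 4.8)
The plan is to prove \eqref{lt} by reducing to the action of $f(A)$ on the generalized eigenspace $E_\gl$ and using the Jordan decomposition. First I would recall from \eqref{pl}, \eqref{24b} that $P_\gl$ projects onto $E_\gl$, that $P_\gl$ commutes with $A$, that $N_\gl = (A-\gl I)P_\gl = P_\gl(A-\gl I)$ is nilpotent with $N_\gl^{\nu_\gl+1}=0$, and that $A P_\gl = \gl P_\gl + N_\gl$. Since $P_\mu P_\gl = 0$ for $\mu\neq\gl$ and $\sum_\mu P_\mu = I$, multiplying any polynomial identity in $A$ on the right by $P_\gl$ kills all the off-$\gl$ contributions, so it suffices to understand $A^k P_\gl$.

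The key computation is that $A^k P_\gl = (\gl I + N_\gl)^k P_\gl$ for every $k\ge 0$: indeed $A P_\gl = (\gl I + N_\gl)P_\gl$, and since $P_\gl$ is idempotent and commutes with both $A$ and $N_\gl$, induction on $k$ gives $A^k P_\gl = (\gl I + N_\gl)^k P_\gl$. By the binomial theorem (everything here commutes) and $N_\gl^m P_\gl = 0$ for $m>\nu_\gl$,
\begin{equation*}
A^k P_\gl = \sum_{m=0}^{\nu_\gl}\binom{k}{m}\gl^{k-m}N_\gl^m P_\gl,
\end{equation*}
with the convention $\binom{k}{m}=0$ for $k<m$. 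Now for an entire $f(z)=\sum_{k\ge0} c_k z^k$ I would write $f(A)P_\gl = \sum_k c_k A^k P_\gl$ and substitute the above. Interchanging the two (absolutely convergent, since $f$ is entire and we are in finite dimensions) sums gives
\begin{equation*}
f(A)P_\gl = \sum_{m=0}^{\nu_\gl} N_\gl^m P_\gl \sum_{k\ge m} c_k \binom{k}{m}\gl^{k-m}
= \sum_{m=0}^{\nu_\gl}\frac{1}{m!} f\mm(\gl) N_\gl^m P_\gl,
\end{equation*}
where the inner sum is recognized as $\frac{1}{m!}f\mm(\gl)$ by termwise differentiation of the Taylor series, $f\mm(z)=\sum_{k\ge m} c_k\, k(k-1)\cdots(k-m+1)\,z^{k-m}$.

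The only mild obstacle is the justification of the interchange of the $k$-sum and the $m$-sum, but this is routine: the $m$-sum is finite (at most $\nu_\gl+1$ terms), and for each fixed $m$ the $k$-series $\sum_{k\ge m} c_k\binom{k}{m}\gl^{k-m}$ converges absolutely because it is (up to the factor $1/m!$) the $m$-th derivative series of the entire function $f$, evaluated at $\gl$. Alternatively, one can first establish \eqref{lt} for polynomials $f$ by the finite computation above (no convergence issue at all), and then extend to entire $f$ by a density/continuity argument using the bound \eqref{nC}: approximate $f$ uniformly on a large disc $K\supset\gs(A)$ by its Taylor polynomials, note that both sides of \eqref{lt} depend continuously on $f$ in the relevant topology (the left side by \eqref{nC}, the right side since $f\mm(\gl)$ is a continuous functional of $f$ on $K$ by Cauchy's estimates), and pass to the limit. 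Either route works; I would present the direct series manipulation as the main argument and remark that it is exactly the Taylor expansion of $f$ about $\gl$ applied to the operator $A$ restricted to $E_\gl$.
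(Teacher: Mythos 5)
Your proof is correct and is essentially the paper's argument: both rest on $AP_\gl=(\gl I+N_\gl)P_\gl$, the Taylor (binomial) expansion about $\gl$, and the nilpotency $N_\gl^{\nu_\gl+1}=0$ truncating the sum. The only difference is cosmetic: the paper carries out the computation explicitly only for polynomials and cites Dunford--Schwartz for general entire $f$, whereas you handle entire $f$ directly by the (correctly justified) interchange of absolutely convergent series, with the polynomial-plus-continuity route as a fallback.
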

\begin{proof}
This is a standard formula in the finite-dimensional case, see
  \cite[Theorem VII.1.8]{DunfordS}, but we give for completeness 
a simple (and perhaps informative) 
proof when $f$ is a polynomial (which is the only case
  that we use, and furthermore implies the general case by 
\cite[Theorem VII.1.5(d)]{DunfordS}). We then have the Taylor expansion
$f(\gl+z)=\sum_{m=0}^\infty \wmm f\mm(\gl)z^m$, which can be seen as an
algebraic identity for polynomials in $z$ (the sum is really finite since
$f\mm=0$ for large $m$), and thus
\begin{equation}
  f(A)P_\gl
= f(\gl I+N_\gl)P_\gl
=\sum_{m=0}^\infty \wmm f\mm(\gl)N_\gl^m P_\gl,
\end{equation}
where $N_\gl^m=0$ when $m>\nu_\gl$.
\end{proof}

Our strategy is to first show estimates for the polynomials $f_{i,j}(z)$
in \eqref{qib} and then use these together with \eqref{nC} and \eqref{lt} to
show the estimates for $F_{i,j}=f_{i,j}(A)$ that we need.

\begin{lemma}
  \label{L0}
  \begin{thmenumerate}
  \item 
For every fixed $i$, 
as $j\to\infty$,
\begin{equation}\label{l0a}
  f_{i,j}(z)=j^z \frac{\gG(i+w_0)}{\gG(i+w_0+z)}\bigpar{1+o(1)},
\end{equation}
\uniz.
\item 
As $i,j\to\infty$ with $i\le j$,
\begin{equation}\label{l0b}
  f_{i,j}(z)=j^z i^{-z}\bigpar{1+o(1)},
\end{equation}
\uniz.
  \end{thmenumerate}
\end{lemma}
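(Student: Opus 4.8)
The plan is to read both estimates off the closed form for $f_{i,j}$ already recorded in \eqref{qib}, namely
\[
f_{i,j}(z)=\frac{\gG(j+w_0+z)}{\gG(j+w_0)}\cdot\frac{\gG(i+w_0)}{\gG(i+w_0+z)},
\]
together with the standard Stirling-type estimate
\[
\frac{\gG(x+z)}{\gG(x)}=x^z\bigpar{1+O(1/x)}\qquad\text{as }x\to\infty,
\]
which holds \uniz. (Recall that $w_0=a\cdot X_0>0$ by \eqref{ten}, so $\gG(i+w_0)$ and $\gG(j+w_0)$ are finite and positive; since $1/\gG$ is entire, $\gG(i+w_0)/\gG(i+w_0+z)$ is an entire function of $z$, as it must be, $f_{i,j}$ being a polynomial.)

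First I would record the Stirling estimate. It is immediate from the classical asymptotic expansion of $\log\gG$, but a short self-contained argument runs as follows: fix a compact $K\subset\bbC$, put $R:=\sup_{z\in K}|z|$, and let $\psi:=\gG'/\gG$ be the digamma function. For $x$ large (say $x>2R$) and $z\in K$ the segment from $x$ to $x+z$ lies in the half-plane $\Re s>x-R$, where $\gG$ is analytic and zero-free, so
\[
\log\frac{\gG(x+z)}{\gG(x)}=\int_0^1 z\,\psi(x+sz)\dd s .
\]
Using $\psi(s)=\log s+O(1/|s|)$ as $\Re s\to\infty$ together with $\log(x+sz)=\log x+O(1/x)$ (uniformly for $z\in K$, $s\in\oi$), the integral equals $z\log x+O(1/x)$ uniformly for $z\in K$; exponentiating gives the estimate. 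I would also note the trivial companion $(x+c)^z=x^z\bigpar{1+O(1/x)}$, uniformly for $z\in K$ and any fixed $c$, since $(1+c/x)^z=\expbig{z\log(1+c/x)}=1+O(1/x)$.

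Part (i): here the second factor $\gG(i+w_0)/\gG(i+w_0+z)$ does not depend on $j$, while by the Stirling estimate (with $x=j+w_0$) and the companion estimate the first factor equals $(j+w_0)^z\bigpar{1+O(1/j)}=j^z\bigpar{1+o(1)}$, uniformly for $z$ in a compact set as $j\to\infty$; multiplying gives \eqref{l0a}. Part (ii): apply the Stirling estimate to both factors. The first is again $j^z(1+o(1))$, and the second equals $\bigpar{(i+w_0)^z\bigpar{1+O(1/i)}}\qw=i^{-z}\bigpar{1+o(1)}$, uniformly for $z$ in a compact set as $i\to\infty$; since a product of two factors each of the form $1+o(1)$ is again $1+o(1)$, this yields \eqref{l0b}.

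The argument is routine; the only point that calls for any care is the uniformity in $z$ over compact sets, which is precisely why I would go through the digamma computation rather than merely quote an asymptotic formula. I do not expect a genuine obstacle.
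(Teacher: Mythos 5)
Your proof is correct and follows essentially the same route as the paper: both read the two estimates directly off the closed form \eqref{qib} and apply the uniform-on-compacta asymptotic $\gG(x+z)/\gG(x)=x^z\bigpar{1+o(1)}$ to the relevant Gamma-ratio factors. The only difference is that you derive this Stirling-type estimate yourself via the digamma integral (which is done correctly, uniformity included), whereas the paper simply cites it as a known consequence of Stirling's formula.
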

\begin{proof}
Both parts follow from \eqref{qib} and the fact that
\begin{equation}\label{gg}
\frac{\gG(x+z)}{\gG(x)}=x^z\etto,   
\end{equation}
uniformly for $z$ in a compact set, as
$x\to\infty$ (with $x$ real, say), which is an easy and well-known
consequence of Stirling's formula, see \cite[5.11.12]{NIST}.
(Note that $\gG(i+w_0)/\gG(i+w_0+z)$ is an entire function for any $i\ge0$,
since $w_0>0$. $\gG(j+w_0+z)/\gG(j+w_0)$ has poles, but for $z$ in a fixed
compact set, this function is analytic when $j$ is large enough.)
\end{proof}

For the derivatives $f_{i,j}\mm(z)$ there are corresponding estimates.

\begin{lemma}
  \label{L0m}
Let $m\ge0$.
\begin{romenumerate}
\item \label{L0ma}
For every fixed $i\ge0$, as $\jtoo$,
\begin{equation}\label{l0ma}
  f\mm_{i,j}(z)
= {j}^z(\log j)^m  \frac{\gG(i+w_0)}{\gG(i+w_0+z)}
+
o\bigpar{{j}^z\log^m j},
%= O\bigpar{{j}^z\log^m j},
\end{equation}
\uniz.

\item \label{L0mb}
As $i,j\to\infty$ with $i\le j$,
\begin{equation}\label{l0mb}
  f\mm_{i,j}(z)=
\Bigparfrac{j}{i}^z\Bigpar{\log\frac{j}{i}}^m
+ o\lrpar{\Bigparfrac{j}{i}^z\Bigpar{1+\log\frac{j}{i}}^m},
\end{equation}
\uniz.
\end{romenumerate}
\end{lemma}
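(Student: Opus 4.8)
The plan is to differentiate the closed form \eqref{qib} for $f_{i,j}$ $m$ times and track the leading term. The cleanest route is to write $f_{i,j}(z)=e^{g_{i,j}(z)}$ with
\begin{equation*}
g_{i,j}(z)=\log\gG(j+w_0+z)-\log\gG(j+w_0)-\log\gG(i+w_0+z)+\log\gG(i+w_0),
\end{equation*}
which is legitimate for $z$ in a fixed compact set once $j$ (hence $i\le j$ in part~(ii), or $i$ fixed and $j$ large in part~(i)) is large enough that no pole of $\gG$ is encountered; here I would fix a large disc $K$ as in \eqref{nC}, work on a slightly larger disc, and note that $f_{i,j}$ has no zeros there for $j$ large. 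Then $f_{i,j}\mm$ is a polynomial (with universal coefficients) in the derivatives $g_{i,j}^{(r)}$, $1\le r\le m$, times $f_{i,j}$ itself — this is the Faà di Bruno / exponential-formula expansion. So the whole problem reduces to estimating $g_{i,j}^{(r)}(z)$.

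For $g_{i,j}'$ we have $g_{i,j}'(z)=\psi(j+w_0+z)-\psi(i+w_0+z)$ where $\psi=\gG'/\gG$ is the digamma function, and for $r\ge2$, $g_{i,j}^{(r)}(z)=\psi^{(r-1)}(j+w_0+z)-\psi^{(r-1)}(i+w_0+z)$. The standard asymptotics $\psi(x)=\log x+O(1/x)$ and $\psi^{(r-1)}(x)=O(x^{-(r-1)})$ for $r\ge2$, all uniform for $z$ in a compact set, then give $g_{i,j}'(z)=\log(j/i)+O(1/i)$ in part~(ii) (and $g_{i,j}'(z)=\log j - \psi(i+w_0+z)+o(1)$ in part~(i), matching the $\gG$-ratio in \eqref{l0ma}), while $g_{i,j}^{(r)}(z)=O(i^{-(r-1)})=o(1)$ for $r\ge2$. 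Plugging into the Faà di Bruno expansion, the dominant contribution is $(g_{i,j}')^m f_{i,j}$; every other term carries at least one factor $g_{i,j}^{(r)}$ with $r\ge2$, hence is $o(1)$ relative to the main term (after dividing by $f_{i,j}=(j/i)^z(1+o(1))$, by \refL{L0}). Collecting, $f_{i,j}\mm(z)=f_{i,j}(z)\bigpar{(g_{i,j}'(z))^m+o\bigpar{(1+\log(j/i))^m}}$, and inserting the estimate for $f_{i,j}$ and for $g_{i,j}'$ gives \eqref{l0mb}; part~(i) is the same computation with the fixed-$i$ version of \refL{L0} and with $j^z\gG(i+w_0)/\gG(i+w_0+z)$ in place of $(j/i)^z$.

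The only mildly delicate point is bookkeeping of the error terms: one must check that in part~(ii) the error is genuinely $o\bigpar{(j/i)^z(1+\log(j/i))^m}$ uniformly — the factor $(1+\log(j/i))^m$ rather than $(\log(j/i))^m$ is needed precisely because $\log(j/i)$ can be bounded (even $0$) when $i$ and $j$ are comparable, so that the ``lower-order'' terms $(g_{i,j}')^{m-1}\cdot o(1)$, etc., are not automatically dominated by $(\log(j/i))^m$. Writing $|g_{i,j}'(z)|\le C(1+\log(j/i))$ uniformly and bounding each Faà di Bruno term by $C(1+\log(j/i))^{m-1}\cdot o(1)$ handles this cleanly. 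I do not expect any real obstacle beyond this; everything rests on the elementary uniform asymptotics of $\psi$ and its derivatives, exactly as \refL{L0} rests on \eqref{gg}.
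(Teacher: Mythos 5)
Your argument for part (ii) is fine and is a genuinely different route from the paper's: you differentiate $\log f_{i,j}$ (digamma/polygamma asymptotics plus Fa\`a di Bruno), whereas the paper normalizes first, setting $h_{i,j}(z)=(i/j)^zf_{i,j}(z)$, notes $h_{i,j}=1+o(1)$ by \refL{L0}, gets $h_{i,j}\qll=o(1)$ for free from Cauchy's estimates, and finishes with the Leibniz rule; the paper's version needs no logarithms or special-function asymptotics beyond \eqref{gg}, while yours is more explicit about where the $(\log(j/i))^m$ comes from. Your handling of the $(1+\log(j/i))^m$ error factor in part (ii) is exactly the right bookkeeping.

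However, part (i) as you wrote it has a genuine gap. The lemma asserts \eqref{l0ma} \uniz, and for fixed $i$ the polynomial $f_{i,j}$ has zeros at the \emph{fixed} points $z=-(k+w_0)$, $i\le k<j$; correspondingly $\psi(i+w_0+z)$ and $\log\gG(i+w_0+z)$ have singularities exactly there. So your claim that ``$f_{i,j}$ has no zeros there for $j$ large'' is true in part (ii) (the zeros have modulus $\ge i+w_0\to\infty$) but false in part (i) whenever the compact set has radius larger than $i+w_0$: the representation $f_{i,j}=e^{g_{i,j}}$ is unavailable near those points, and your bounds on $g_{i,j}'$ blow up as $z$ approaches them, so uniformity on an arbitrary fixed compact set is not obtained. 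The statement is still true, and there are two ways to repair it: either argue as the paper does, applying Cauchy's estimates and Leibniz to the \emph{entire} function $g_j(z)=j^{-z}f_{i,j}(z)$ (note that $\gG(i+w_0)/\gG(i+w_0+z)$ is entire, so no zeros or branch issues arise); or keep your computation on a circle $|z|=R$ chosen at distance at least $\tfrac12$ from all points $-(k+w_0)$ (there the local identity $f^{(m)}_{i,j}=f_{i,j}\cdot B_m(g',\dots,g^{(m)})$ with $g^{(r)}(z)=\psi^{(r-1)}(j+w_0+z)-\psi^{(r-1)}(i+w_0+z)$ is legitimate) and then transfer the estimate to the full disc by the maximum principle applied to the entire function $(\log j)^{-m}\bigl(j^{-z}f^{(m)}_{i,j}(z)-(\log j)^m\gG(i+w_0)/\gG(i+w_0+z)\bigr)$. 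Without one of these extra steps, your proof of part (i) only covers compact sets avoiding $\{-(i+w_0+\ell):\ell\ge0\}$, which is weaker than what the lemma claims and than what is convenient for \refL{L1}.
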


\begin{proof}
  \pfitemref{L0ma}
Let $g_j(z)= j^{-z}f_{i,j}(z)$.
Then, by \eqref{l0a},
\begin{equation}\label{jeeves}
g_j(z)=
 \frac{\gG(i+w_0)}{\gG(i+w_0+z)}\bigpar{1+o(1)}
=O(1)
\qquad\text{as \jtoo},
\end{equation}
uniformly in each compact set,
and thus by Cauchy's estimates, for any $\ell\ge1$,
\begin{equation}\label{ask}
g_j\qll(z)=O(1)  
\qquad\text{as \jtoo},
\end{equation}
uniformly in each compact set.
By Leibnitz' rule, 
\begin{equation}
  \begin{split}
f_{i,j}\mm(z)
&=
\frac{\ddx^m}{\ddx z^m}\bigpar{j^z g_j(z)}  
=\sum_{\ell=0}^m 
\binom{m}{\ell}
\frac{\ddx^\ell}{\ddx z^\ell}j^z \cdot
g_j^{(m-\ell)}(z)
\\&
=\sum_{\ell=0}^m 
\binom{m}{\ell}
(\log j)^\ell j^z
g_j^{(m-\ell)}(z)	
  \end{split}
\end{equation}
and \eqref{l0ma} follows by \eqref{jeeves}--\eqref{ask}.

\pfitemref{L0mb}
Similarly, let (for $1\le i\le j$)
$h_{i,j}(z)= (i/j)^{z}f_{i,j}(z)$.
Then, by \eqref{l0b},
\begin{equation}\label{embla}
h_{i,j}(z)=
1+o(1)
\qquad\text{as $i,j\too$},
\end{equation}
uniformly in each compact set,
and thus by Cauchy's estimates, for any $\ell\ge1$,
\begin{equation}\label{hector}
h_{i,j}\qll(z)=
\frac{\ddx^\ell}{\ddx z^\ell}\bigpar{h_{i,j}(z)-1}
=o(1)  
\qquad\text{as $i,j\too$},
\end{equation}
uniformly in each compact set.
By Leibnitz' rule, 
\begin{equation}
  \begin{split}
f_{i,j}\mm(z)
&=
\frac{\ddx^m}{\ddx z^m}\bigpar{(j/i)^z h_{i,j}(z)}  
=\sum_{\ell=0}^m 
\binom{m}{\ell}
\frac{\ddx^\ell}{\ddx z^\ell}(j/i)^z \cdot h_{i,j}^{(m-\ell)}(z)
\\&
=\sum_{\ell=0}^m 
\binom{m}{\ell}
\bigpar{\log (j/i)}^\ell (j/i)^z
h_{i,j}^{(m-\ell)}(z)	
  \end{split}
\end{equation}
and \eqref{l0mb} follows by \eqref{embla}--\eqref{hector}.
\end{proof}

We now apply these estimates to $F_{i,j}$, noting that 
by \refL{LT},
\begin{equation}\label{ltij}
  \begin{split}
F_{i,j}P_\gl=f_{i,j}(A)P_\gl = 
\sum_{m=0}^{\nu_\gl}\frac{1}{m!} f_{i,j}\mm(\gl)N_\gl^mP_\gl.	
  \end{split}
\end{equation}

\begin{lemma}\label{L1}
If $b=1$, then, for $n\ge2$ and $\gl\in\gsa$,
  \begin{equation}\label{l1ax}
  {F_{0,n} P_\gl} 
=n^{\gl}\log^{\nu_\gl}n \frac{\gG(w_0)}{\nul!\,\gG(w_0+\gl)}\nl^{\nul}\pl
+ o\bigpar{n^{\Re\gl}\log^{\nu_\gl}n}.	
%= O\bigpar{n^{\Re\gl}\log^{\nu_\gl}n}.	
  \end{equation}
\end{lemma}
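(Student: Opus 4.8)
The plan is to apply \refL{LT} directly to the entire function $f_{0,n}$, which expresses $F_{0,n}P_\gl$ as a finite sum over $m=0,\dots,\nu_\gl$ of terms $\tfrac{1}{m!}f_{0,n}\mm(\gl)N_\gl^m P_\gl$, as recorded in \eqref{ltij} with $i=0$. The estimate \eqref{l1ax} will then follow term by term from the asymptotics for the derivatives $f_{0,n}\mm(\gl)$ provided by \refL{L0m}\ref{L0ma} (with $i=0$, which is permitted there since $w_0>0$ makes $\gG(w_0)/\gG(w_0+z)$ entire). Indeed, \eqref{l0ma} gives
\begin{equation*}
f_{0,n}\mm(\gl)=n^\gl(\log n)^m\frac{\gG(w_0)}{\gG(w_0+\gl)}+o\bigpar{n^{\Re\gl}\log^m n}.
\end{equation*}

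First I would substitute this into \eqref{ltij}. The leading exponential factor $n^\gl$ is common to all terms, and the power of $\log n$ in the $m$-th term is $m$; hence the dominant term is the one with the largest $m$ for which $N_\gl^m P_\gl\neq0$, namely $m=\nu_\gl$, by the definition of $\nu_\gl$ (so $N_\gl^{\nu_\gl}P_\gl\neq0$ but $N_\gl^{\nu_\gl+1}P_\gl=0$). This singles out exactly the stated main term $n^\gl\log^{\nu_\gl}n\,\dfrac{\gG(w_0)}{\nu_\gl!\,\gG(w_0+\gl)}N_\gl^{\nu_\gl}P_\gl$. The terms with $m<\nu_\gl$ contribute $O\bigpar{n^{\Re\gl}\log^{m}n}=o\bigpar{n^{\Re\gl}\log^{\nu_\gl}n}$, and the error terms from \eqref{l0ma} are likewise $o\bigpar{n^{\Re\gl}\log^{\nu_\gl}n}$ after multiplying by the fixed matrices $\tfrac{1}{m!}N_\gl^m P_\gl$ (whose norms are absolute constants depending only on $A$). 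Collecting these estimates yields \eqref{l1ax}.

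The only point requiring mild care is the uniformity: \refL{L0m}\ref{L0ma} asserts the expansion uniformly for $z$ in a compact set, but here we only evaluate at the single fixed point $z=\gl\in\gsa$, so uniformity is not even needed — pointwise convergence suffices. One should also note that the restriction $n\ge2$ in the statement is merely to ensure $\log n>0$ (and is harmless); the asymptotic content is for $n\to\infty$. I do not anticipate a genuine obstacle here: the lemma is essentially a bookkeeping exercise combining the functional-calculus identity \refL{LT} with the derivative estimate \refL{L0m}\ref{L0ma}, and the main (very small) subtlety is simply identifying $m=\nu_\gl$ as the dominant index and checking that all lower-order and error contributions are absorbed into $o\bigpar{n^{\Re\gl}\log^{\nu_\gl}n}$.
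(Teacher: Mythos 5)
Your proposal is correct and follows essentially the same route as the paper: it combines the functional-calculus expansion \eqref{ltij} (from \refL{LT}) with the derivative asymptotics \eqref{l0ma} of \refL{L0m}, isolates the $m=\nu_\gl$ term as the main contribution, and absorbs the $m<\nu_\gl$ terms and error terms into $o\bigpar{n^{\Re\gl}\log^{\nu_\gl}n}$. No gaps; this matches the paper's argument.
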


\begin{proof}
By \eqref{ltij} and \eqref{l0ma},
\begin{equation}
  \begin{split}
F_{0,n}P_\gl
&=\sum_{m=0}^{\nu_\gl}\frac{1}{m!} f_{0,n}\mm(\gl)N_\gl^mP_\gl
%\\&
=\frac{1}{\nu_{\gl}!} f_{0,n}^{(\nul)}(\gl)N_\gl^{\nul} P_\gl
+\sum_{m=0}^{\nu_\gl-1} O\bigpar{n^\gl \log^m n},
  \end{split}
\end{equation}
which yields \eqref{l1ax} by another application of \eqref{l0ma}.
\end{proof}

\begin{lemma}\label{L2}
If $b=1$, then, for $1\le i\le j$ and $\gl\in\gsa$,
  \begin{equation}\label{l2}
  F_{i,j} P_\gl = O\bigpar{(j/i)^{\Re\gl}(1+\log(j/i))^{\nu_\gl}}.	
  \end{equation}
More precisely,
for any $\nu\ge\nu_\gl$, 
as $i,j\to\infty$ with $i\le j$,
\begin{multline}\label{tedeum}
F_{i,j} P_\gl = \frac{1}{\nu!}\parfrac{j}{i}^\gl  
\log^{\nu}\parfrac{j}{i} {N_\gl^{\nu}P_\gl }
%\\
+ o\lrpar{\Bigparfrac{j}{i}^{\Re\gl}  
\log^{\nu}\Bigparfrac{j}{i}}
\\
+
O\lrpar{
  \Bigparfrac{j}{i}^{\Re\gl}\Bigpar{1+\log^{\nu-1}\Bigparfrac{j}{i}}}.	    
\end{multline}
\end{lemma}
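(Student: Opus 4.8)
The plan is to follow the pattern of the proof of \refL{L1}: substitute the derivative estimates of \refL{L0m} into the spectral identity \eqref{ltij}. Two regimes must be kept apart, though. The crude bound \eqref{l2} is needed for \emph{all} $1\le i\le j$ (including bounded $i$, where $f_{i,j}\mm(\gl)$ is not yet in its asymptotic regime), whereas the precise expansion \eqref{tedeum} is an estimate only as $i\to\infty$, uniformly over $j\ge i$.

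First I would prove the uniform scalar bound $|f_{i,j}\mm(\gl)|=O\bigpar{(j/i)^{\Re\gl}(1+\log(j/i))^m}$, valid for every $m\ge0$ and uniformly for $1\le i\le j$. Since $f_{i,j}$ is a polynomial, hence entire, Cauchy's estimate on the circle $\set{|z-\gl|=r}$ gives $|f_{i,j}\mm(\gl)|\le m!\,r^{-m}\sup_{|z-\gl|=r}|f_{i,j}(z)|$ for any $r>0$. Writing $f_{i,j}(z)=\prod_{i\le k<j}\bigpar{1+z/(k+w_0)}$ and estimating each factor by $|1+w|\le\exp\bigpar{\Re w+\tfrac12|w|^2}$, together with $\sum_{i\le k<j}(k+w_0)\qw=\log(j/i)+O(1)$ and $\sum_{k\ge1}(k+w_0)\qww<\infty$, one gets $\sup_{|z-\gl|=r}|f_{i,j}(z)|\le C(j/i)^{\Re\gl+r}$ uniformly for $1\le i\le j$ and $0<r\le1$, with $C=C(\gl,w_0)$. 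Choosing $r=\min\set{1,1/\log(j/i)}$ makes $(j/i)^r\le e$ while $r^{-m}\le(1+\log(j/i))^m$, which gives the claim. (Alternatively, the fixed-$i$ range can be handled by \eqref{l0ma} and the $i,j\to\infty$ range by \eqref{l0mb}, the remaining finitely many bounded pairs by continuity.) Summing \eqref{ltij} over $0\le m\le\nu_\gl$, the matrices $N_\gl^mP_\gl$ being fixed, then yields \eqref{l2}.

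For the precise expansion \eqref{tedeum}: if $\nu>\nu_\gl$ then $N_\gl^\nu P_\gl=0$ and $\nu_\gl\le\nu-1$, so \eqref{l2} already gives $F_{i,j}P_\gl=O\bigpar{(j/i)^{\Re\gl}(1+\log(j/i))^{\nu-1}}$, matching the asserted error. If $\nu=\nu_\gl$, split the sum in \eqref{ltij} into the top term $m=\nu_\gl$ and the rest. The terms with $m<\nu_\gl$ contribute $O\bigpar{(j/i)^{\Re\gl}(1+\log(j/i))^{\nu_\gl-1}}$ by the uniform bound proved above. For the top term, \eqref{l0mb} with $m=\nu_\gl$ gives $f_{i,j}^{(\nu_\gl)}(\gl)=(j/i)^\gl\log^{\nu_\gl}(j/i)+o\bigpar{(j/i)^{\Re\gl}(1+\log(j/i))^{\nu_\gl}}$; expanding $(1+\log(j/i))^{\nu_\gl}=\log^{\nu_\gl}(j/i)+O\bigpar{(1+\log(j/i))^{\nu_\gl-1}}$ (legitimate since $\log(j/i)\ge0$) splits this error into $o\bigpar{(j/i)^{\Re\gl}\log^{\nu_\gl}(j/i)}$ plus a term of the order of the stated $O$-term. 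Multiplying by $\tfrac1{\nu_\gl!}N_\gl^{\nu_\gl}P_\gl$ and adding the lower-order contribution gives \eqref{tedeum}.

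The genuine work is establishing the uniform control of $f_{i,j}\mm(\gl)$ over the whole range $1\le i\le j$ — in particular handling bounded $i$ and the variation of $\Re z$ on the Cauchy circle by the well-chosen radius $r$. Everything after that is bookkeeping with binomial expansions to put the error into the exact shape of \eqref{tedeum}, in direct parallel with \refL{L1}. One should also note explicitly in the write-up that \eqref{l2} is uniform in $i,j$ while \eqref{tedeum} is asymptotic as $i\to\infty$.
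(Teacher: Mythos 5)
Your proof is correct, and its skeleton — substitute the derivative estimates for $f_{i,j}\mm(\gl)$ into the spectral formula \eqref{ltij}, take the top term $m=\nu$ as the main contribution and absorb the lower-order terms into the $O$-term — is exactly the paper's. The one place you genuinely diverge is the crude uniform bound \eqref{l2}: the paper simply says it follows from \eqref{ltij} and \eqref{l0mb}, which, read literally, only covers the regime $i,j\to\infty$ and tacitly relies on combining \eqref{l0ma} (fixed $i$) with \eqref{l0mb} and finitely many exceptional pairs to get uniformity over all $1\le i\le j$ — precisely the alternative you mention in passing. Your main route instead proves the uniform scalar estimate $|f_{i,j}\mm(\gl)|=O\bigpar{(j/i)^{\Re\gl}(1+\log(j/i))^m}$ directly, via the product bound $|1+w|\le\exp\bigpar{\Re w+\tfrac12|w|^2}$ and a Cauchy estimate on a circle of radius $r=\min\set{1,1/\log(j/i)}$; this is self-contained, makes the uniformity explicit with explicit constants, and is a nice (slightly longer) substitute for the paper's appeal to \refL{L0m}. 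Your handling of \eqref{tedeum} by splitting into the cases $\nu>\nu_\gl$ and $\nu=\nu_\gl$ is equivalent to the paper's device of extending the sum in \eqref{ltij} to $m\le\nu$ (the added terms vanish since $N_\gl^m=0$ for $m>\nu_\gl$), and your remark that \eqref{l2} is uniform in $i,j$ while \eqref{tedeum} is asymptotic as $i\to\infty$ correctly reflects how the two estimates are used later (in \eqref{jul1} and \eqref{regina} respectively).
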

\begin{proof}
This is similar to the proof of \refL{L1}.
First, \eqref{l2} follows directly from \eqref{ltij} and \eqref{l0mb}.

For \eqref{tedeum}, note that the summation in \eqref{ltij} may be
extended to $m\le\nu$, since $N_\gl^m=0$ when $m>\nu_\gl$. 
Then use \eqref{l0mb} for each term $m=\nu$.
\end{proof}

\begin{lemma}
  \label{L1b}
%If $\gl_1=b=1$,
%\begin{equation}\label{l1b}
%  F_{0,n}P_{\gl_1} = \frac{n+w_0}{w_0}P_{\gl_1}.
%\end{equation}
If\/ $\Re\gl_2<\gl_1=b=1$,  then
for $0\le i\le j$,
\begin{equation}\label{l1bx}
  \begin{split}
F_{i,j}P_{\gl_1}=
f_{i,j}(\gl_1)P_{\gl_1}	
=\frac{j+w_0}{i+w_0}P_{\gl_1}.
  \end{split}
\end{equation}
%and thus, in particular, \eqref{l1b}.
\end{lemma}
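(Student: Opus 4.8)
The plan is to use that under the hypothesis $\Re\gl_2<\gl_1$ the dominant eigenvalue $\gl_1=b=1$ is simple, so that $\nu_{\gl_1}=0$ and $N_{\gl_1}=0$. Feeding this into \refL{LT} in the form \eqref{ltij} with $\gl=\gl_1$ makes the sum collapse to its single $m=0$ term, giving at once
$F_{i,j}P_{\gl_1}=f_{i,j}(A)P_{\gl_1}=f_{i,j}(\gl_1)P_{\gl_1}$,
which is the first equality in \eqref{l1bx}. The only point requiring care here is to invoke simplicity of $\gl_1$ (recorded just before \eqref{normalized}), and that is what makes the derivative terms of the functional calculus disappear.

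For the second equality I would simply evaluate the polynomial $f_{i,j}$ of \eqref{qib} at $z=\gl_1=1$. In the product form $f_{i,j}(z)=\prod_{i\le k<j}\frac{k+w_0+z}{k+w_0}$ the choice $z=1$ produces a telescoping product: the numerator $(k+1)+w_0$ of the $k$-th factor cancels the denominator $(k+1)+w_0$ of the next factor, leaving $f_{i,j}(1)=\frac{j+w_0}{i+w_0}$. (The same conclusion is immediate from the Gamma-function form of \eqref{qib} together with $\gG(x+1)=x\gG(x)$.) The degenerate case $i=j$ (empty product) is consistent, since then $F_{i,i}=I$ and $\frac{j+w_0}{i+w_0}=1$.

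There is essentially no real obstacle in this lemma; it is a bookkeeping step isolating the behaviour of the products $F_{i,j}$ on the leading eigenspace. If one prefers to avoid \refL{LT} entirely, an equivalent and perhaps cleaner route is to argue directly: since $\gl_1$ is simple, $AP_{\gl_1}=\gl_1 P_{\gl_1}=P_{\gl_1}$, so each factor of $F_{i,j}=\prod_{i\le k<j}(I+w_k\qw A)$ acts on the range of $P_{\gl_1}$ as multiplication by $1+w_k\qw=(w_k+1)/w_k=(k+w_0+1)/(k+w_0)$ by \eqref{wn2}, and the same telescoping yields $F_{i,j}P_{\gl_1}=\frac{j+w_0}{i+w_0}P_{\gl_1}$.
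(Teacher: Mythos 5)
Your proof is correct and follows essentially the same route as the paper: simplicity of $\gl_1$ gives $\nu_{\gl_1}=0$, so \eqref{ltij} collapses to $F_{i,j}P_{\gl_1}=f_{i,j}(\gl_1)P_{\gl_1}$, and evaluating the telescoping product \eqref{qib} at $z=1$ gives $(j+w_0)/(i+w_0)$. Your alternative direct argument via $AP_{\gl_1}=P_{\gl_1}$ is a pleasant shortcut but not a substantively different method.
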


\begin{proof}
Since $\gl_1$ thus is assumed to be a simple eigenvalue,
$\nu_{\gl_1}=0$. (Alternatively, see \refL{Lapp}.)
Hence, \eqref{ltij} yields 
$F_{i,j}\pli=f_{i,j}(\gl_1)\pli$.
Furthermore,  \eqref{qib} yields
\begin{equation}\label{qic}
  f_{i,j}(\gl_1)=f_{i,j}(1)=\frac{j+w_0}{i+w_0},
\end{equation}
and \eqref{l1bx} follows.
\end{proof}

\begin{lemma}\label{L3}
  For any fixed $x\in(0,1]$, as \ntoo,
\begin{equation}
  F_{\ceil{xn},n} \to x^{-A}.
\end{equation}
\end{lemma}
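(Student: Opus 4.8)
The plan is to combine the functional calculus with the asymptotics of the scalar polynomials $f_{i,j}$ from \refL{L0}. Recall from \eqref{qib} that $F_{\ceil{xn},n}=f_{\ceil{xn},n}(A)$, and note that since $x\in(0,1]$ we have $1\le\ceil{xn}\le n$ for every $n\ge1$, with $\ceil{xn}\to\infty$ and $n/\ceil{xn}\to 1/x$ as \ntoo. Hence, by the second part of \refL{L0}, i.e.\ by \eqref{l0b},
\[
  f_{\ceil{xn},n}(z)=n^z\ceil{xn}^{-z}\bigpar{1+o(1)}
  =\Bigparfrac{n}{\ceil{xn}}^{z}\bigpar{1+o(1)}
  \longrightarrow x^{-z}
  \qquad\text{as \ntoo},
\]
uniformly for $z$ in any fixed compact set (the factor $\bigpar{n/\ceil{xn}}^{z}\to x^{-z}$ locally uniformly since $n/\ceil{xn}\to 1/x$). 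Equivalently, $g_n(z):=f_{\ceil{xn},n}(z)-x^{-z}$ satisfies $g_n\to0$ uniformly on every compact subset of $\bbC$.

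Next I would invoke the functional calculus bound \eqref{nC}. The function $g_n$ is entire, being the difference of the polynomial $f_{\ceil{xn},n}$ and the entire function $z\mapsto x^{-z}=e^{-(\log x)z}$, and $x^{-A}$ is defined by the corresponding Taylor series (as before \eqref{gSI}). Fixing a compact set $K$ with $\gsa$ in its interior, and letting $C$ be the corresponding constant in \eqref{nC},
\[
  \bignorm{F_{\ceil{xn},n}-x^{-A}}=\bignorm{g_n(A)}
  \le C\sup_{z\in K}\bigabs{g_n(z)}\longrightarrow 0 ,
\]
which is the assertion.

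There is no genuine obstacle here; the only point needing a moment's care is that the convergence $f_{\ceil{xn},n}(z)\to x^{-z}$ must be \emph{uniform} on a fixed compact neighbourhood of $\gsa$ in order to apply \eqref{nC}, and this is exactly what \refL{L0} delivers. Alternatively one could avoid \eqref{nC} and argue through the spectral decomposition $F_{i,j}=\sum_{\gl}F_{i,j}P_\gl$: by \refL{LT}, $F_{\ceil{xn},n}P_\gl=\sum_{m=0}^{\nu_\gl}\frac1{m!}f_{\ceil{xn},n}\mm(\gl)N_\gl^mP_\gl$, so it would suffice to check, using Cauchy's estimates as in the proof of \refL{L0m}, that $f_{\ceil{xn},n}\mm(\gl)$ converges to the $m$-th derivative of $z\mapsto x^{-z}$ at $\gl$ for each $\gl\in\gsa$ and $m\le\nu_\gl$; but the route via \eqref{nC} is shorter.
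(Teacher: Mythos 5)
Your argument is correct and is essentially the paper's proof: apply \eqref{l0b} with $i=\ceil{xn}$, $j=n$ to get $f_{\ceil{xn},n}(z)\to x^{-z}$ uniformly on a compact set $K$ containing $\gs(A)$ in its interior, and then conclude via the functional-calculus bound \eqref{nC}. The alternative route you sketch through \refL{LT} is not needed; the main route matches the paper exactly.
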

\begin{proof}
  Let $K$ be a compact set containing $\gs(A)$ in its interior.
As \ntoo, by \eqref{l0b}, 
\begin{equation}%\label{l0b}
  f_{\ceil{xn},n}(z)
=\Bigparfrac{n}{\ceil{xn}}^z \bigpar{1+o(1)}
=x^{-z}\etto=x^{-z}+o(1),
\end{equation}
uniformly for $z\in K$.
Consequently, 
$f_{\ceil{xn},n}(z)-x^{-z}\to0$ uniformly on $K$, and thus 
$F_{\ceil{xn},n}-x^{-A}\to0$ by \eqref{nC}.
\end{proof}

\begin{lemma}
  \label{LB}
There exists $i_0$ and $C$ such that if $i_0\le i\le j\le 2i$, then
$\norm{F_{i,j}\qw}\le C$. 
\end{lemma}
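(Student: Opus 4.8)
The plan is to realize $F_{i,j}\qw$ through the functional calculus and estimate it with \eqref{nC} and \refL{L0}\eqref{l0b}. Fix once and for all a compact set $K$ (say a large closed disc) with $\gs(A)$ in its interior, and let $C=C(A,K)$ be the constant from \eqref{nC}. Recall from \eqref{qib} that $F_{i,j}=f_{i,j}(A)$, where $f_{i,j}$ is a polynomial whose zeros are exactly the points $-(k+w_0)$, $i\le k<j$; these are negative reals of modulus at least $i+w_0$. Hence, choosing $i_0$ so large that all the points $-(k+w_0)$ with $k\ge i_0$ lie outside $K$, we get that for $i\ge i_0$ the function $h_{i,j}(z):=1/f_{i,j}(z)$ is analytic in a neighbourhood of $K$; in particular $f_{i,j}$ does not vanish on $\gs(A)$, so $F_{i,j}$ is invertible with $F_{i,j}\qw=h_{i,j}(A)$, and \eqref{nC} yields
\begin{equation*}
  \norm{F_{i,j}\qw}\le C\sup_{z\in K}|h_{i,j}(z)|=\frac{C}{\inf_{z\in K}|f_{i,j}(z)|}.
\end{equation*}

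It then remains to bound $\inf_{z\in K}|f_{i,j}(z)|$ away from $0$, uniformly over $i_0\le i\le j\le 2i$. Here I would invoke \refL{L0}\eqref{l0b}: $f_{i,j}(z)=(j/i)^z\bigpar{1+o(1)}$ as $i,j\to\infty$, uniformly for $z\in K$, where the error term depends only on $i,j$ and can be made as small as we like once $i\ge i_0$ (for any $j\ge i$). Under the hypothesis $i\le j\le 2i$ the ratio $r:=j/i$ lies in $[1,2]$, and for $z\in K$ we have $|r^z|=r^{\Re z}\ge 2^{-M}$, where $M:=\sup_{z\in K}|\Re z|<\infty$. Consequently, after enlarging $i_0$ so that the error term above is at most $\tfrac12$ in modulus for $i\ge i_0$, we obtain $|f_{i,j}(z)|\ge\tfrac12\,2^{-M}$ for all $z\in K$, hence $\norm{F_{i,j}\qw}\le 2^{M+1}C$ for all $i_0\le i\le j\le 2i$, as claimed.

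There is essentially no hard step here: the only point requiring a little care is that the error term in \refL{L0}\eqref{l0b} be uniform over all admissible pairs $(i,j)$ (with $i\le j$ and $i$ large) rather than just along a single sequence — but this is precisely the content of that lemma, which follows from applying \eqref{gg} separately to the two Gamma-quotients in \eqref{qib}, so the estimate is legitimate. Everything else is a routine combination of \eqref{nC}, \refL{L0}, and the elementary bound $r^{\Re z}\ge 2^{-M}$ for $r\in[1,2]$.
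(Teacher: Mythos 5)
Your proof is correct and follows essentially the same route as the paper's: use Lemma~\ref{L0}(ii) to choose $i_0$ so that $|f_{i,j}(z)|\ge\tfrac12|(j/i)^z|$ on a compact $K\supset\gs(A)$, note that $j/i\in[1,2]$ bounds $|(j/i)^z|$ below on $K$, and conclude via \eqref{nC} applied to $1/f_{i,j}$. Your extra remarks (locating the zeros $-(k+w_0)$ of $f_{i,j}$ outside $K$, and the uniformity in $j\ge i$ of the error in \eqref{l0b}) just make explicit details the paper leaves implicit.
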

\begin{proof}
  Let again $K$ be a compact set containing $\gs(A)$ in its interior.
By \eqref{l0b}, we may choose $i_0$ such that if $i_0\le i\le j$, then
$|f_{i,j}(z)|\ge \frac12 |(j/i)^z|$ on $K$. If furthermore $i\le j\le 2i$, this
implies  $|f_{i,j}(z)|\ge c$ on $K$, for some $c>0$, and thus
$|f_{i,j}\qw(z)|\le c\qw$ on $K$. The result follows by \eqref{nC}.
(The condition $j\le 2i$ is not needed when $\gs(A)\subset\set{\Re z>0}$
so we may assume $\Re z\ge0$ for $z\in K$.)
\end{proof}

\section{Completions of the proofs}\label{Spf2}

\begin{proof}[Proof of \refT{TE}]
  By \eqref{EX} and \eqref{pl},
  \begin{equation}\label{ele}
	\E X_n = \sumgla F_{0,n} P_\gl X_0.
  \end{equation}
For each eigenvalue $\gl\neq\gl_1$, \refL{L1} shows that
\begin{equation}\label{gemini}
  F_{0,n} P_\gl X_0=
O\bigpar{n^{\Re\gl}\log^{\nu_\gl}n}
= O\bigpar{n^{\Re\gl_2}\log^{\nu_2}n}.
\end{equation}
Furthermore, by \eqref{p1},
\begin{equation}\label{matt}
  P_{\gl_1}X_0=(a\cdot X_0)v_1 = w_0v_1,
\end{equation}
and it follows from \eqref{l1bx} that
\begin{equation}\label{win}
  \begin{split}
  F_{0,n}P_{\gl_1}X_0 = \frac{n+w_0}{w_0}P_{\gl_1}X_0	
= \frac{n+w_0}{w_0} w_0 v_1
=(n+w_0) v_1.
  \end{split}
\end{equation}
The result \eqref{te} follows (when $\gl_1=1$)
 from \eqref{ele}, \eqref{gemini} and
\eqref{win}. 
\end{proof}

\begin{lemma}\label{LY0}
For every $n$,
$  P_{\gl_1}Y_n=0$.
\end{lemma}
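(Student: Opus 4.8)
The plan is to reduce the statement to the balance condition via the explicit form of the projection onto the dominant eigenspace. By \eqref{p1}, in the balanced case $P_{\gl_1}v=(a\cdot v)v_1$ for every $v\in\bbR^q$. Since $Y_n\in\bbR^q$ (it is a difference of two $\bbR^q$-valued quantities by \eqref{Yn}), it therefore suffices to prove that $a\cdot Y_n=0$, and then $P_{\gl_1}Y_n=(a\cdot Y_n)v_1=0$.

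To see that $a\cdot Y_n=0$, I would expand, using \eqref{Yn} together with the linearity of the scalar product and of conditional expectation,
\begin{equation*}
a\cdot Y_n = a\cdot\gD X_{n-1}-\E\bigpar{a\cdot\gD X_{n-1}\mid\cF_{n-1}}.
\end{equation*}
The crucial point is that $a\cdot\gD X_{n-1}$ is deterministic. Indeed, by the definition of the urn process the increment $\gD X_{n-1}=X_n-X_{n-1}$ is the realized replacement vector at the $n$-th draw; conditionally on the drawn colour being $i$ it has the law of $\xi_i$, and the balance condition \eqref{balanced} gives $a\cdot\xi_i=b$ \as{} for every $i$. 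Hence $a\cdot\gD X_{n-1}=b$ \as, so $\E\bigpar{a\cdot\gD X_{n-1}\mid\cF_{n-1}}=b$ as well, and the two terms cancel, yielding $a\cdot Y_n=0$.

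I do not expect any real obstacle here; the only thing to be careful about is the (essentially notational) identification of $\gD X_{n-1}$ with a replacement vector, so that \eqref{balanced} applies to it. As a sanity check, the same conclusion is consistent with the fact that $a\cdot X_n=w_n$ is deterministic by \eqref{wn1}, so that $a\cdot(X_n-\E X_n)=0$ for every $n$; combined with \eqref{gw} and the left-eigenvector identity $a'A=ba'$ from \eqref{bala}, this again forces $a\cdot Y_\ell=0$ for all $\ell$, but the direct computation above is shorter and does not need \eqref{gw}.
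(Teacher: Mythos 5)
Your proof is correct and is essentially identical to the paper's: both reduce the claim via the projection formula \eqref{p1} to showing $a\cdot Y_n=0$, and both obtain this by noting that the balance condition makes $a\cdot\gD X_{n-1}=b$ nonrandom, so the two terms in \eqref{Yn} cancel. No gaps.
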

\begin{proof}
Since the urn is balanced, $a\cdot \gD X_n=b$ is nonrandom, and thus, by  
\eqref{Yn}, 
\begin{equation}
a\cdot Y_n:=a\cdot\gD X_{n-1} - \E\bigpar{a\cdot\gD X_{n-1}\mid \cF_{n-1}}
=b-b=0.
\end{equation}
The result follows by \eqref{p1}.
\end{proof}

Using \eqref{pl}, we can rewrite \eqref{varx} as
\begin{equation}\label{varxx}
 \begin{split}
\Var(X_n)
=\sumgl\summu \sum_{i=1}^{n} F_{i,n}P_\gl \E\bigpar{Y_i Y_i'}P_\mu' F_{i,n}'.
  \end{split}
\end{equation}
For convenience, we define
\begin{equation}\label{tin}
\tin:=F_{i,n}P_\gl \E\bigpar{Y_i Y_i'}P_\mu' F_{i,n}'.
\end{equation}
Note that \refL{LY0} implies $P_{\gl_1}\E(Y_iY_i')=\E(P_{\gl_1}Y_iY_i')=0$
and thus also, by taking the transpose, $\E(Y_iY_i')P_{\gl_1}'=0$.
Hence $\tin=0$ when  $\gl=\gl_1$ or $\mu=\gl_1$, so these terms can be dropped
and \eqref{varxx} can be written
\begin{equation}\label{varxxx}
\Var(X_n)
=\sumglx\summux \sum_{i=1}^{n} \tin.
\end{equation}
We begin with a simple estimate of this sum.
The same estimates are given in \cite[Theorem 2.2]{BaiHu2005} under similar
conditions.
\begin{lemma}\label{LV}
  If $\gl_1=1$, then, for $n\ge2$,
  \begin{equation}\label{lv}
	\Var X_n
=
\begin{cases}
  O\bigpar{n},
& \Re\gl_2<\frac12,\\
  O\bigpar{n\log^{2\nu_2+1}n },
& \Re\gl_2=\frac12,\\
  O\bigpar{n^{2\Re\gl_2}\log^{2\nu_2}n },
& \Re\gl_2>\frac12.
\end{cases}
  \end{equation}
In particular,
  if $\gl_2<\gl_1=1$, then
  \begin{equation}\label{lva}
\Var(X_n) = o\bigpar{n^2}.	
  \end{equation}
\end{lemma}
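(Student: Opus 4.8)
The plan is to estimate the sum in \eqref{varxxx} term by term, using \refL{L2} to bound each block $F_{i,n}P_\gl$ and then summing over $i$. First I would record the elementary bound $\E\bigpar{Y_iY_i'}=O(1)$: indeed $Y_i=\gD X_{i-1}-\E(\gD X_{i-1}\mid\cF_{i-1})$, and by \eqref{Exi2} (or the stronger boundedness in the tenable balanced case) the replacement vectors have uniformly bounded second moments, so $\norm{\E(Y_iY_i')}\le C$ for all $i$. Hence, from \eqref{tin} and \refL{L2} applied with $\nu=\nu_\gl\le\nu_2$ (and similarly for $\mu$), for each pair $\gl,\mu\neq\gl_1$,
\begin{equation*}
  \norm{\tin}=O\Bigpar{(n/i)^{\Re\gl}(1+\log(n/i))^{\nu_\gl}\cdot(n/i)^{\Re\mu}(1+\log(n/i))^{\nu_\mu}}
  =O\Bigpar{(n/i)^{2\Re\gl_2}(1+\log(n/i))^{2\nu_2}},
\end{equation*}
since $\Re\gl,\Re\mu\le\Re\gl_2$ and $\nu_\gl,\nu_\mu\le\nu_2$ whenever the exponents are maximal (recall the ordering convention on the eigenvalues). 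Summing over the finitely many pairs $(\gl,\mu)$ only changes the implied constant, so it remains to estimate $\sum_{i=1}^n (n/i)^{2\Re\gl_2}(1+\log(n/i))^{2\nu_2}$.

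Next I would carry out this scalar sum by comparison with an integral, writing $i=xn$ so that the sum is comparable to $n\int_{1/n}^{1} x^{-2\Re\gl_2}(1+\log(1/x))^{2\nu_2}\dd x$. There are three regimes according to the exponent $2\Re\gl_2$ relative to $1$. If $2\Re\gl_2<1$ the integral converges (the logarithmic factor is harmless near $x=0$), giving the bound $O(n)$. If $2\Re\gl_2=1$, the integrand is $x^{-1}(1+\log(1/x))^{2\nu_2}$, whose integral over $[1/n,1]$ is $\asymp \log^{2\nu_2+1}n$, giving $O(n\log^{2\nu_2+1}n)$. If $2\Re\gl_2>1$, the integral is dominated by the lower endpoint: $\int_{1/n}^1 x^{-2\Re\gl_2}(1+\log(1/x))^{2\nu_2}\dd x\asymp n^{2\Re\gl_2-1}\log^{2\nu_2}n$, so multiplying by $n$ gives $O(n^{2\Re\gl_2}\log^{2\nu_2}n)$. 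These three cases are exactly the three lines of \eqref{lv}. Finally, \eqref{lva} is immediate: if $\gl_2<\gl_1=1$ then $\Re\gl_2<1$, so in all three cases the bound in \eqref{lv} is $o(n^2)$ (in the third case because $2\Re\gl_2<2$, with the logarithmic factor absorbed).

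I do not expect any serious obstacle here; the lemma is a crude bound and everything reduces to the integral comparison above. The one point to be careful about is the bookkeeping of the logarithmic exponents: one must use $\nu_\gl\le\nu_2$ only for those eigenvalues attaining $\Re\gl=\Re\gl_2$ and note that eigenvalues with strictly smaller real part contribute a strictly smaller power of $n$ (with at most $\nu_2$ extra logs), so they are absorbed into the stated error in every case. Also, for $i$ close to $n$ the factor $(1+\log(n/i))$ stays bounded, so no problem arises at the upper end of the summation; and \refL{L2} is valid for all $1\le i\le n$, so there is no need for a separate treatment of small $i$.
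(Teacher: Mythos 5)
Your proposal is correct and follows essentially the same route as the paper: bound $\E(Y_iY_i')=O(1)$, control $F_{i,n}P_\gl$ by \refL{L2}, and sum over $i$ with the three regimes $2\Re\gl_2<1$, $=1$, $>1$ (and the absorption of eigenvalues with smaller real part, which you justify correctly since a strictly smaller power of $n/i$ beats any fixed power of logarithms). The only cosmetic differences are bookkeeping: the paper estimates each pair $(\gl,\mu)$ separately and, in the subcritical case, replaces $(n/i)^{\Re\gl+\Re\mu}(1+\log(n/i))^{\nu_\gl+\nu_\mu}$ by $(n/i)^{\ga}$ with $\Re\gl+\Re\mu<\ga<1$ before summing, which also sidesteps the monotonicity issue in the sum--integral comparison when $\Re\gl_2\le0$, whereas you take the worst-case bound uniformly over pairs first and integrate the logarithmic factor directly.
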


\begin{proof}
It follows from \eqref{Exi2} that $\E (Y_nY_n')=O(1)$.
By combining this and \refL{L2}, we see that
if $\gl$ and $\mu$ are two eigenvalues, then,
for $1\le i\le n$,
\begin{equation}\label{jul1}
\tin=
 F_{i,n}P_\gl \E\bigpar{Y_i Y_i'}(F_{i,n}P_\mu)' 
=O\bigpar{(n/i)^{\Re\gl+\Re\mu}(1+\log (n/i))^{\nu_\gl+\nu_\mu}}.
\end{equation}
If $\Re\gl+\Re\mu\ge1$, we note that this implies
\begin{equation}\label{jul2}
\tin % F_{i,n}P_\gl \E\bigpar{Y_i Y_i'}(F_{i,n}P_\mu)' 
=O\bigpar{(n/i)^{\Re\gl+\Re\mu}\log^{\nu_\gl+\nu_\mu}n}
\end{equation}
while if $\Re\gl+\Re\mu<1$, we choose $\ga$ with $\Re\gl+\Re\mu<\ga<1$ and
note that \eqref{jul1} implies 
\begin{equation}\label{jul3}
\tin % F_{i,n}P_\gl \E\bigpar{Y_i Y_i'}(F_{i,n}P_\mu)' 
=O\bigpar{(n/i)^{\ga}}.
\end{equation}
By summing over $i$ we obtain from \eqref{jul2} and \eqref{jul3},
\begin{equation}\label{jul4}
  \begin{split}
\sumin \tin %F_{i,n}P_\gl \E\bigpar{Y_i Y_i'}(F_{i,n}P_\mu)' 
%=O\Bigpar{n^{\Re\gl+\Re\mu}\log^{\nu_\gl+\nu_\mu}n \sumin i^{-(\Re\gl+\Re\mu)}}
%\\
=
\begin{cases}
  O\bigpar{n},
& \Re\gl+\Re\mu<1,\\
  O\bigpar{n\log^{\nu_\gl+\nu_\mu+1}n },
& \Re\gl+\Re\mu=1,\\
  O\bigpar{n^{\Re\gl+\Re\mu}\log^{\nu_\gl+\nu_\mu}n },
& \Re\gl+\Re\mu>1.
\end{cases}
  \end{split}
\end{equation}
The result \eqref{lv}
follows from \eqref{varxxx} by summing \eqref{jul4} over the finitely many
$\gl,\mu\in\gsax$ and noting that our estimates are largest for
$\gl=\mu=\gl_2$. The simpler estimate \eqref{lva} is an immediate consequence.
\end{proof}

\begin{lemma}\label{LYlim}
If\/ $\Re\gl_2<\gl_1=1$,  then, as \ntoo,
\begin{equation}\label{lylim}
\E\bigpar{Y_nY_n'}\to B-v_1v_1'.  
\end{equation}
Hence, for any eigenvalue $\gl\neq\gl_1$, 
\begin{equation}\label{plylim}
P_\gl\E\bigpar{Y_nY_n'}\to P_\gl B.
\end{equation}
\end{lemma}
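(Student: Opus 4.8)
The plan is to compute $\E\bigpar{Y_nY_n'}$ by first conditioning on $\cF_{n-1}$. Given $\cF_{n-1}$, the increment $\gD X_{n-1}$ is distributed as an independent copy of $\xi_j$ with probability $a_jX_{n-1,j}/w_{n-1}$ by \eqref{pin} and the definition of the urn; hence
\begin{equation*}
  \E\bigpar{\gD X_{n-1}\gD X_{n-1}'\mid\cF_{n-1}}
  =\frac1{w_{n-1}}\sumjq a_jX_{n-1,j}\,\E\bigpar{\xi_j\xi_j'} .
\end{equation*}
Since $Y_n=\gD X_{n-1}-\E\bigpar{\gD X_{n-1}\mid\cF_{n-1}}$ and, by \eqref{mia}, $\E\bigpar{\gD X_{n-1}\mid\cF_{n-1}}=w_{n-1}\qw AX_{n-1}$, this gives
\begin{equation*}
  \E\bigpar{Y_nY_n'\mid\cF_{n-1}}
  =\frac1{w_{n-1}}\sumjq a_jX_{n-1,j}\,\E\bigpar{\xi_j\xi_j'}
  -\frac1{w_{n-1}^2}(AX_{n-1})(AX_{n-1})' .
\end{equation*}
Taking expectations and using $\E\bigsqpar{(AX_{n-1})(AX_{n-1})'}=A\,\E\bigpar{X_{n-1}X_{n-1}'}\,A'$, the problem reduces to the asymptotics of $\E X_{n-1}/w_{n-1}$ and of $\E\bigpar{X_{n-1}X_{n-1}'}/w_{n-1}^2$.

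For the first term I would invoke \refT{TE}: recalling $\gl_1=1$, it gives $\E X_{n-1}=(n-1+w_0)v_1+O\bigpar{n^{\Re\gl_2}\log^{\nu_2}n}$, so $\E X_{n-1,j}/w_{n-1}\to v_{1j}$ for each $j$, and since $q$ is finite the first term converges to $\sumjq a_jv_{1j}\E\bigpar{\xi_j\xi_j'}=B$ by the definition \eqref{B}.

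For the second term I would split $\E\bigpar{X_{n-1}X_{n-1}'}=\Var(X_{n-1})+(\E X_{n-1})(\E X_{n-1})'$. By \refL{LV} the first summand is $o(n^2)$, while the expansion of $\E X_{n-1}$ just used shows the second equals $(n-1+w_0)^2v_1v_1'+o(n^2)$; dividing by $w_{n-1}^2=(n-1+w_0)^2$ gives $w_{n-1}^{-2}\,\E\bigpar{X_{n-1}X_{n-1}'}\to v_1v_1'$, so the second term tends to $Av_1v_1'A'$. Since $Av_1=\gl_1v_1=v_1$, this is $v_1v_1'$, and subtracting it from the limit $B$ of the first term yields \eqref{lylim}. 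For \eqref{plylim} I would multiply \eqref{lylim} on the left by $P_\gl$ with $\gl\in\gsa$, $\gl\neq\gl_1$; by \eqref{p1} and \eqref{normalized} we have $P_{\gl_1}v_1=v_1$, so $P_\gl v_1=P_\gl P_{\gl_1}v_1=0$ (as $P_\gl P_{\gl_1}=0$), whence $P_\gl\bigpar{B-v_1v_1'}=P_\gl B$.

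The argument is essentially bookkeeping and has no real obstacle; the only step that relies on the machinery already developed is the quadratic term $\E\bigpar{X_{n-1}X_{n-1}'}$, which cannot be evaluated directly and is instead handled by peeling off the (now known) mean and using the crude a priori bound $\Var(X_{n-1})=o(n^2)$ from \refL{LV} — which is precisely why that lemma is established first.
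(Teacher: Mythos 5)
Your proposal is correct and follows essentially the same route as the paper's own proof: condition on $\cF_{n-1}$ to write $\E(Y_nY_n'\mid\cF_{n-1})$ as the conditional second moment of the increment minus $w_{n-1}^{-2}(AX_{n-1})(AX_{n-1})'$, then use \refT{TE} for the linear term and the splitting $\E(X_{n-1}X_{n-1}')=\Var(X_{n-1})+(\E X_{n-1})(\E X_{n-1})'$ with the $o(n^2)$ bound of \refL{LV}, finishing with $Av_1=v_1$ and $P_\gl v_1=P_\gl P_{\gl_1}v_1=0$ for \eqref{plylim}. No gaps.
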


\begin{proof}
  By \eqref{Yn} and \eqref{mia},
$Y_{n+1}=\gD X_n-w_n\qw AX_n$, with $\E (Y_{n+1}\mid\cF_n)=0$ by \eqref{eyn}.
Hence,
\begin{equation}\label{ja}
  \begin{split}
	\E\bigpar{Y_{n+1}Y_{n+1}'\mid \cF_n}
=
	\E\bigpar{\gD X_n(\gD X_n)'\mid \cF_n} -w_n\qww A X_n(AX_n)'
  \end{split}
\end{equation}
and thus
\begin{equation}\label{jb}
  \begin{split}
	\E\bigpar{Y_{n+1}Y_{n+1}'}
=
	\E\bigpar{\gD X_n(\gD X_n)'} -w_n\qww A \E\bigpar{X_nX_n'}A'.
  \end{split}
\end{equation}

By the definition of the urn and \eqref{pin},
\begin{equation*}%\label{pia}
  \begin{split}
\E\bigpar{\gD X_n(\gD X_n)'\mid \cF_n}
&=\sumjq \P\bigpar{I_{n+1}=j\mid \cF_n}\E\bigpar{\xi_j\xi_j'}
=\sumjq \frac{a_j X_{nj}}{w_n}\E\bigpar{\xi_j\xi_j'}
  \end{split}
\end{equation*}
and thus, using \eqref{wn2} and \refT{TE}, and recalling \eqref{B},
as \ntoo,
\begin{equation}\label{pib}
  \begin{split}
\E\bigpar{\gD X_n(\gD X_n)'}
=\sumjq \frac{a_j \E X_{nj}}{n+w_0}\E\bigpar{\xi_j\xi_j'}
\to
\sumjq a_j v_{1j}\E\bigpar{\xi_j\xi_j'}
= B.
  \end{split}
\end{equation}
Furthermore, by \eqref{lva} and \refT{TE} again,
\begin{equation}\label{pic}
n^{-2} \E\bigpar{X_nX_n'}
=n\qww \Var(X_n)+n\qww(\E X_n)(\E X_n)'
\to 0+v_1v_1'.
\end{equation}

Consequently, by \eqref{jb}, \eqref{pib}, \eqref{pic},
and recalling that $w_n/n\to1$ by
\eqref{wn2} and $Av_1=\gl_1v_1=v_1$,
\begin{equation}
\E\bigpar{Y_{n+1}Y_{n+1}'}
\to B-Av_1v_1'A'
= B-v_1v_1'.  
\end{equation}
This proves \eqref{lylim}, and \eqref{plylim} follows by noting that 
$P_\gl v_1=P_\gl P_{\gl_1}v_1=0$ when $\gl\neq\gl_1$.
\end{proof}

\begin{proof}[Proof of \refT{TV1}]
Let $\gl,\mu\in\gsax$, and note that, by our assumption,
$\Re\gl,\Re\mu\le\Re\gl_2<\frac12\gl_1=\frac12$. 
Write the inner sum in \eqref{varxxx} as an integral:
\begin{equation}\label{inte}
\frac{1}n  \sumin\tin =
\intoi \txn \dd x.
\end{equation}
For each fixed $x\in\ooi$, by Lemmas \ref{L3} and \ref{LYlim},
%and noting that $P_\gl v_1=0$,
\begin{equation}\label{tlim}
  \begin{split}
\txn&=  F\xnn P_\gl \E\bigpar{Y_{\ceil{xn}}Y'_{\ceil{xn}}}P_\mu'F\xnn' 
\\&
\to 
%x^{-A} P_{\gl} \bigpar{B-v_1v_1'}P_\mu' x^{-A'}
%\\&=  
x^{-A} P_{\gl} B P_\mu' x^{-A'}.	
  \end{split}
\end{equation}
Furthermore,
choose some $\ga\in[0,1)$ such that $\Re\gl_2<\frac12\ga$. 
Then, \eqref{jul3} applies and yields, for some $C<\infty$,
\begin{equation}
\begin{split}
  \txn 
\le C (n/\ceil{xn})^{\ga} 
\le C x^{-\ga},
%= O\bigpar{(n/\ceil{xn})^{\ga}} =O(x^{-\ga}),
\end{split}  
\end{equation}
which is integrable on $\ooi$. Thus, Lebesgue's theorem on dominated
convergence applies to \eqref{inte} and yields, by \eqref{tlim}
and the change of variables $x=e^{-s}$,
\begin{equation*}%\label{inte}
\frac{1}n  \sumin\tin 
\to
\intoi  x^{-A} P_{\gl} B P_\mu' x^{-A'} \dd x
=\intoo  e^{sA} P_{\gl} B P_\mu' e^{sA'} e^{-s}\dd s.
\end{equation*}
Hence, \eqref{varxxx} and the definition \eqref{gSI} yield
\begin{equation*}
\frac{1}n\Var X_n=
\frac{1}n  \sumglx\summux\sumin\tin 
\to
\intoo  e^{sA} \PIx B \PIx' e^{sA'} e^{-s}\dd s
=\gS_I,
\end{equation*}
showing \eqref{tv1}.
\end{proof}

\begin{proof}[Proof of \refT{TV2}]
As in the proof of \refT{TV1},
we use \eqref{varxxx} and
consider the sum $\sumin\tin$ for two eigenvalues $\gl,\mu\in\gsax$.
By assumption, $\Re\gl+\Re\mu\le2\Re\gl_2=1$, and if $\Re\gl+\Re\mu<1$, then 
$\sumin\tin=O(n)$ by \eqref{jul4}. Hence we only have to consider the case
$\Re\gl+\Re\mu=1$, \ie, $\Re\gl=\Re\mu=\frac12=\Re\gl_2$.
In particular, $\nu_\gl,\nu_\mu\le\nu_2$.

In this case, as in \eqref{inte}, we
transform the sum into an integral, but this time in a somewhat
different way.
We have, using the change of variables $x=n^y=e^{y\log n}$,
\begin{equation}
  \begin{split}
\sumin\tin &= 
\tqn1 + \int_1^n \tqn{\ceil x}\dd x
\\&
=\tqn1 + \intoi \tyn  n^y \log n\dd y.	
  \end{split}
\end{equation}
Hence, since $\tqn1=O\bigpar{n\log^{2\nu_2}n}$ by \eqref{jul2},
\begin{equation}\label{benedictus}
  \begin{split}
\bigpar{n\log^{2\nu_2+1}n}\qw\sumin\tin 
& 
=o(1) + \intoi n^{y-1} (\log n)^{-2\nu_2}\tyn\dd y.	
  \end{split}
\end{equation}
Fix $y\in(0,1)$. Then, by \eqref{tedeum},
\begin{equation}\label{regina}
  \begin{split}
F\nyn P_\gl &= \frac{1}{\nu_2!}\parfrac{n}{\ceil{n^y}}^\gl  
\log^{\nu_2}\parfrac{n}{\ceil{n^y}} \Bigpar{N_\gl^{\nu_2}P_\gl	+o(1)}
\\&
= \frac{1}{\nu_2!}n^{(1-y)\gl}\bigpar{(1-y)\log n}^{\nu_2} 
\bigpar{N_\gl^{\nu_2}P_\gl	+o(1)}
  \end{split}
\end{equation}
and similarly for $\mu$. 

Recall the assumption $\Re\gl+\Re\mu=1$, and let $\tau:=\Im\gl+\Im\mu$, so
$\gl+\mu=1+\ii\tau$.
Then, by  \eqref{tin}, \eqref{regina} and \eqref{plylim},
  \begin{multline}\label{coeli}
n^{y-1} (\log n)^{-2\nu_2}\tyn
\\
= \frac{1}{(\nu_2!)^2}n^{\ii(1-y)\tau}(1-y)^{2\nu_2} 
N_\gl^{\nu_2}P_\gl B \bigpar{N_\mu^{\nu_2}	P_\mu}'+o(1).
\end{multline}
Moreover, by \eqref{jul2}, uniformly for $y\in\ooi$ and $n\ge2$,
\begin{equation}
  \begin{split}
 n^{y-1} (\log n)^{-2\nu_2}\tyn
= O\bigpar{(n/\ceil{n^y}) n^{y-1}} = O(1).	
  \end{split}
\end{equation}
Hence the error term $o(1)$ in \eqref{coeli} is also uniformly bounded, and
we can apply dominated convergence to the integral of it, yielding 
%{\multlinegap=0pt
  \begin{multline}\label{rex}
\intoi  n^{y-1} (\log n)^{-2\nu_2}\tyn\dd y
\\
= \frac{1}{(\nu_2!)^2}\intoi n^{\ii(1-y)\tau}(1-y)^{2\nu_2} \dd y\cdot
N_\gl^{\nu_2}P_\gl {B} \bigpar{N_\mu^{\nu_2}	P_\mu}'+o(1).
\end{multline}
In the case $\tau=0$, \ie, $\mu=\bgl$, the integral on the \rhs{} of
\eqref{rex} is
$\intoi (1-y)^{2\nu_2}\dd y=(2\nu_2+1)\qw$. Furthermore, in this case,
$P_\mu=P_{\bgl}=\overline{P_\gl}$ and thus $P_\mu'=P_\gl^*$,
and similarly $N_\mu'=N_\gl^*$. Hence, \eqref{rex} yields
\begin{equation}\label{kyrie}
\intoi n^{y-1} (\log n)^{-2\nu_2}\tynb\dd y
= \frac{1}{(2\nu_2+1)(\nu_2!)^2}
N_\gl^{\nu_2}P_\gl {B} P_\gl^*\xpar{N_\gl^*}^{\nu_2}
+o(1).
\end{equation}
On the other hand, if $\tau\neq0$, then, with $u=1-y$,
\begin{equation}%\label{miseri}
  \intoi n^{\ii(1-y)\tau}(1-y)^{2\nu_2} \dd y
=
  \intoi e^{\ii (\tau\log n)u} u^{2\nu_2} \dd u
\to0
\end{equation}
as \ntoo{} and thus $|\tau\log n|\to\infty$, 
by an integration by parts
(or by the Riemann--Lebesgue lemma).
Hence, when $\mu\neq\bgl$,
\eqref{rex} yields
\begin{equation}\label{miseri}
  \intoi  n^{y-1} (\log n)^{-2\nu_2}\tyn\dd y
=o(1).
\end{equation}

We saw in the beginning of the proof that we can ignore 
the terms in \eqref{varxxx} with $\Re\gl<\frac12$ or $\Re\mu<\frac12$, and
by \eqref{benedictus} and \eqref{miseri}, we can also ignore the case 
$\Re\gl=\Re\mu=\frac12$ but $\mu\neq\bgl$. Hence only the case $\mu=\bgl$
with $\Re\gl=\frac12$ remains in \eqref{varxxx}, and the result follows by 
\eqref{benedictus} and \eqref{kyrie}.
\end{proof}

\begin{proof}[Proof of \refT{TL2}]
By \eqref{lva},
\begin{equation*}
\E\norm{X_n/n-\E X_n/n}^2=n^{-2}\E\norm{X_n-\E X_n}^2
=\sumiq n\qww\Var(X_{ni})\to0,
\end{equation*}
and
  $\E X_n/n\to v_1$ by \refT{TE}.
Hence,
$\E\norm{X_n/n-v_1}^2\to0$, which is the claimed convergence in $L^2$.

Moreover, if we fix 
%$0<\eps<1/2$ 
$\eps\in(0,\frac12)$
such that $\Re\gl_2<1-\eps$, 
then the same argument shows, using \eqref{lv}
and \eqref{ele}--\eqref{win}, that, more precisely,
\begin{equation}\label{qin}
  \E\norm{X_n-(n+w_0)v_1}^2 = O\bigpar{n^{2-2\eps}}.
\end{equation}
Let $N\ge1$. 
By \eqref{kia} and the definition \eqref{qia},
for any $n\le N$, 
\begin{equation}\label{qid}
  F_{n,N}X_n = F_{0,N} X_0+\sum_{\ell=1}^{n}F_{\ell,N} Y_{\ell}.
\end{equation}
Moreover, 
by \eqref{eyn}, $Y_n$ is a martingale difference sequence, and
thus so is, for $n\le N$, $F_{n,N}Y_n$. Hence, \eqref{qid} shows that
$F_{n,N}X_n$, $n\le N$, is a martingale, and thus
\begin{equation}\label{ophelia}
  F_{n,N}X_n=\E\bigpar{X_N\mid\cF_n},\qquad n\le N.
\end{equation}
By \refL{L1b}, $F_{n,N} v_1=F_{n,N} \pli v_1=\frac{N+w_0}{n+w_0}v_1$ and thus
\eqref{ophelia} implies
\begin{equation}
 F_{n,N}\bigpar{X_n-(n+w_0)v_1}=\E\bigpar{X_N-(N+w_0)v_1\mid\cF_n},
\qquad n\le N.
\end{equation}
Hence, by Doob's inequality (applied to each coordinate) and \eqref{qin},
\begin{equation}
\E\sup_{n\le N}\norm{ F_{n,N}\bigpar{X_n-(n+w_0)v_1}}^2
\le 4\E\norm{X_N-(N+w_0)v_1}^2
=O\bigpar{N^{2-2\eps}}
\end{equation}
It follows, using \refL{LB}, that if $N\ge 2i_0$, then
\begin{equation}\label{poker}
\E\sup_{N/2\le n\le N}\norm{\bigpar{X_n-(n+w_0)v_1}/n}^2
=O\bigpar{N^{-2\eps}}.
\end{equation}
This holds trivially for smaller $N$ as well, since each $X_n\in L^2$ and
thus the \lhs{} of \eqref{poker} is finite for each $N$. 
Consequently, taking $N=2^k$ and
summing, 
\begin{equation}
  \E\sumk \sup_{2^{k-1}\le n\le 2^k}\norm{\bigpar{X_n-(n+w_0)v_1}/n}^2 <\infty.
\end{equation}
Consequently, $\norm{\bigpar{X_n-(n+w_0)v_1}/n}\to0$ a.s., and thus
$X_n/n\asto v_1$.
\end{proof}

\begin{proof}[Proof of \refT{TS}]
If $\Var(u\cdot X_n)=0$ for every $n$, then $u'\gS u=0$ by \eqref{agnus}.

For the converse, assume that $u'\gS u=0$. Then, by \eqref{tv1} and
\eqref{gSI},
\begin{equation}
0=u'\gS_I u=\intoo u'\PIx e^{sA} B e^{sA'} \PIx'u\, e^{-\gl_1 s} \dd s.
\end{equation}
The integrand is a continuous function of $s\ge0$, and non-negative since $B$
is non-negative definite by \eqref{B}. Hence, the integrand vanishes for
every $s\ge0$. In particular, taking $s=0$ we obtain, using \eqref{B} again,
\begin{equation}\label{sev}
  \begin{split}
0 &= u'\PIx B \PIx' u
=
  \sumiq a_i v_{1i} u'\PIx \E(\xi_i\xi_i')\PIx' u 
\\&
=
  \sumiq a_i v_{1i} \E\bigpar{u'\PIx \xi_i(u'\PIx\xi_i)'}
%\\&
=
  \sumiq a_i v_{1i} \E\bigpar{u'\PIx \xi_i}^2,
  \end{split}
\end{equation}
noting that $u'\PIx \xi_i$ is a scalar.
Each term is non-negative, and thus each term is 0. 
If  $i$ is such that $a_i>0$, then it follows from the assumption that $\tA$
is irreducible that $v_{1i}>0$, and
hence \eqref{sev} yields
$\E\xpar{u'\PIx \xi_i}^2=0$ and thus $u'\PIx\xi_i=0$ a.s.
Furthermore, since the urn is balanced,
by \eqref{p1},
\begin{equation}
  P_{\gl_1}\xi_i = (a\cdot\xi_i)v_1 = b v_1.
\end{equation}
Hence, for every $i$ with $a_i>0$,
\begin{equation}
  \begin{split}
	u\cdot\xi_i = u\cdot \bigpar{\PIx+P_{\gl_1}}\xi_i
=0+u\cdot (bv_1)=bu\cdot v_1.
  \end{split}
\end{equation}
This is independent of $i$, and thus, for every $n$, a.s.,
\begin{equation}
  u\cdot \gD X_n=bu\cdot v_1.
\end{equation}
Consequently, a.s.,
\begin{equation}
u\cdot  X_n=u\cdot X_0+nbu\cdot v_1
\end{equation}
and thus $u\cdot X_n$ is deterministic.
\end{proof}

\section{Examples}\label{Sex}

\Polya{} urns have been used for a long time in various applications, for
example to study fringe structures in various random trees, see for example
\cite{BagchiPal}, \cite{AldousFP}, \cite{Devroye-local}.
Some recent examples are given in \cite{SJ292},
where, in particular, 
the number of two-protected nodes in a random $m$-ary search tree is studied for
$m=2$ and $3$ using suitable 
\Polya{} urns with 5 and 19 types, respectively, and it is shown
that
if this number is denoted by $Y_n$ ($m=2$) or $Z_n$ ($m=3$) for a search
tree with $n$ keys, then
\begin{align}\label{prot2}
\dfrac{Y_n-\frac{11}{30}n}{\sqrt{n}}
&\dto
N\lrpar{0,\frac{29}{225}},
\\
\frac{Z_n-\frac{57}{700}n}{\sqrt{n}}
&\dto
N\lrpar{0,\frac{1692302314867}{43692253605000}}.  \label{prot3}
\end{align}
(The binary case \eqref{prot2} had earlier been shown by \citet{MahmoudWard}
using other methods.)
The urns are strictly small; in both cases $\gl_1=1$ and $\gl_2=0$, with
$\nu_2=0$, and Theorems \ref{TE} and \ref{TV1} yield,
using the calculations in \cite{SJ292},
see \refR{RTV}, 
\begin{align}
\E Z_n&=\frac{57}{700}\,n+ O(1),
\\
\Var Z_n&
=
\frac{1692302314867}{43692253605000}\,n + o(n),
\end{align}
together with corresponding results for $Y_n$.
(The results for $Y_n$ were earlier shown in \citet{MahmoudWard},
where exact formulas for the mean and variance of $Y_n$ are given.)

Furthermore, \cite{SJ292} also studies the numbers of leaves 
and one-protected nodes
in a random 
$m$-ary search tree using a similar but simpler urn.
(For $m=2$ this was done already by \citet{Devroye-local}.)
For $2\le m\le 26$, this is a strictly small urn, and again the results in
\refS{Sresults} 
yield asymptotics of mean and variance. 

See \cite{SJ309} for further similar examples.

\begin{remark}\label{Rgaps-types}
  As said above, the urn used to show \eqref{prot2} has 5 types,
  corresponding to 5 different small trees.
To draw a ball corresponds to adding a node to a (randomly chosen) gap in
the corresponding tree; this may cause the tree to break up into several
smaller trees.
The 5 types have 4,3,2,1,0 gaps each, and these numbers are their
activities. Moreover, for type 2, the gaps are not equivalent, which makes
the replacement for this type random. 
(We have $\xi_2=(1,-1,0,0,0)$ with probability $1/3$
and $\xi_2=(0,0,0,1,0)$ with probability $2/3$, see \cite{SJ292}.)

A different, essentially equivalent, approach is to instead as types
consider the different gaps in the different trees;
this yields 5 new types that we denote by 1, 2A, 2B, 3, 4.
The transition from the old urn to the new is a simple linear
transformation: each old ball of type 1 is replaced by 4 new of type 1, 
which we write as $1\to4\cdot 1$, and
similarly $ 2\to \mathrm{2A}+2\cdot\mathrm{2B}$,
$3\to2\cdot 3$, $4\to4$, while balls of type 5 (which has activity 0) are
ignored.
This yields a new \Polya{} urn, where all types have activity 1. In the new
urn, all replacements are deterministic, which sometimes is an advantage, 
but on the other hand, replacements now may involve subtractions. 
For example, in the original urn, $\xi_1=(-1,1,1,0,0)$, meaning that if we
draw a ball of type 1, it is discarded and replaced by one of type 2 and one
of type 3. In the new urn, this translates to 
$\xi_1=(-4,1,2,2,0)$, meaning that we remove the drawn ball together with 3
others of type 1, and then add $\mathrm{2A}+2\cdot\mathrm{2B}+2\cdot3$.
Even worse, $\xi_2=(4,-1,-2,0,0)$, meaning that if we draw a ball of type
2A, we remove it together with two balls of type 2B, and add 4 balls of type
1. Nevertheless, by the construction, the urn is obviously tenable in the
sense of the present paper. This urn, with the gaps as types, thus is an
example of a tenable urn with subtractions that occur naturally in an
application.

The \Polya{} urn for the ternary search tree with 19 types in \cite{SJ292} 
can similarly be translated into an urn (with 29 types)
using gaps as types, again with
deterministic replacements, but sometimes subtractions.

See also \cite{SJ292}, where the transition to the corresponding urn with
gaps was used for the simpler urn used to study leaves; 
in that case there are no subtractions.
\end{remark}

\section{Further comments}\label{Sfurther}

%\subsection{Small and large urns, and longterm dependency}
The decomposition \eqref{kia} and its consequence \eqref{varx} 
explain some of the differences
between the small and large urns stated in the introduction.
Suppose again for convenience that $\gl_1=1$. Then, the term
$F_{\ell,n}Y_\ell$ in \eqref{kia}, which is the (direct and indirect) 
contribution from the $\ell$-th draw, has a variance
roughly (ignoring logarithmic factors when $\nu_2>0$) of the order 
$(n/\ell)^{2\Re\gl_2}$, see \refL{LV} and its proof.
For a large urn, this decreases rapidly with $\ell$
and $\sum_\ell \ell^{-2\Re\gl_2}$ converges, and thus the variance is
dominated by the contribution from the first draws. This strong long-term
dependency leads to the 
\as{} limit results, and the dependency of the limit on the initial
state $X_0$.

On the other hand, for a strictly small urn, the sum of the variances is of the
order $n$, but each term is $o(n)$ and is negligible, 
which explains why the first draws, and the initial state, do not affect
the limit distribution. 
In fact, 
for a component $X_{n,i}$ with asymptotic variance $(\gS)_{ii}>0$,
we see that for any $\eps>0$, all but a fraction
$\eps$ of $\Var X_{n,i}$ 
is explained by the draws with numbers in $[\gd n, n]$,
for some $\gd=\gd(\eps)>0$. The long-term dependency is thus weak in
this case.

The remaining case, a small urn with $\Re\gl_2=1/2$, is similar to the
strictly small case, but the long-term dependency is somewhat stronger.
If we for simplicity assume $\nu_2=0$, then the contribution of the $\ell$-th
draw to $\Var(X_n)$ is of the order $n/\ell$, giving a total variance of
order $n\log n$. 
Again, the first draws and the initial state do not affect
the limit distribution, but in order to explain all but a fraction $\eps$ of
the variance, we have to use the draws in 
$[n^\gd, n]$, for some small $\gd>0$.
(Cf.\ the functional limit theorem \cite[Theorem 3.31]{SJ154} with
different time scales for strictly small and non-strictly small urns.)

Cf.\ also \cite[Remark 4.3]{SJ154}, where a similar argument is made using the
corresponding continuous time branching process.

\appendix

\section{The largest eigenvalue}\label{Aten}

We have seen in \eqref{bala} that for a balanced urn, $b$ is an eigenvalue
of $A$, with a non-negative left eigenvector $a$. 

In typical applications, $b$ is the largest eigenvalue $\gl_1$.
Before proceeding, let us note that this is not always true.

\begin{example}\label{Ebad}
 As a counterexample, consider an urn with three colours, with activities
  1, and the  (deterministic) replacements 
$\xi_1=(1,2,0)$,
$\xi_2=(2,1,0)$,
$\xi_3=(-1,0,4)$.
The urn is balanced, with $b=3$, and if we start with $X_0=(1,0,0)$, the
urn is tenable. Nevertheless, the largest eigenvalue $\gl_1=4>b$.
Of course, the reason is that the urn never will contain any ball of colour
3, so this ought to be treated as an urn with just colours 1 and 2. (If
there is any ball of colour 3, the urn is not tenable.)
\end{example}

\refE{Ebad} is obviously a silly counterexample, but it shows that we need 
some extra assumption to exclude such trivialities.
We have the following result, which shows that if we only use colours that
actually can occur, then $\gl_1=b$ holds (or, at least, may be assumed) and
$\nu_1=0$. 

\begin{lemma}\label{Lapp}
  If the \Polya{} urn is tenable and balanced, 
and moreover any colour has a non-zero probability of ever appearing in the urn,
then $\Re\gl\le b$ for every
  $\gl\in\gs(A)$, and, furthermore, if\/ $\Re\gl=b$ then $\nul=0$.
We may thus assume $\gl_1=b$.
\end{lemma}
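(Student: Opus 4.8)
The plan is to use the probabilistic meaning of the powers $A^n$ together with the fact that the urn is tenable, so that $X_n$ stays nonnegative and has controlled size. The key observation is that the entries of $A^n$ govern the growth of $\E X_n$: iterating \eqref{mia} and using balance (so $w_k = k+w_0$ is deterministic), we have from \eqref{EX} and \eqref{qia} that $\E X_n = F_{0,n}X_0 = f_{0,n}(A)X_0$, and by Lemma~\ref{L0} (or directly from \eqref{qib} and \eqref{gg}) $f_{0,n}(z) \sim n^z \gG(w_0)/\gG(w_0+z)$ as \ntoo. Hence, projecting onto a generalized eigenspace $E_\gl$ with $\Re\gl$ maximal, Lemma~\ref{L1} gives $\norm{F_{0,n}P_\gl}$ of order $n^{\Re\gl}\log^{\nu_\gl}n$, and similarly $\norm{\E X_n}$ is of order $n^{\Re\gl_1^{}}\log^{\nu}n$ where $\gl_1^{}$ is the eigenvalue of largest real part and $\nu$ the corresponding nilpotency index — provided the relevant projection of $X_0$ (or rather of the vector reached from $X_0$) does not vanish, which is where tenability enters.

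First I would show $\Re\gl \le b$ for all $\gl\in\gs(A)$. Since the urn is balanced, $w_n = n + w_0$ and \eqref{wn} give $a\cdot \E X_n = w_0 + nb$, so $\norm{\E X_n} \ge (a\cdot \E X_n)/\norm{a} = (w_0+nb)/\norm{a}$, which grows like $n^b$ (recall $b>0$). On the other hand, from \eqref{EX}, \eqref{pl} and Lemma~\ref{L1}, $\E X_n = \sum_{\gl\in\gs(A)} F_{0,n}P_\gl X_0 = O\bigpar{n^{\max_\gl \Re\gl}\log^{C} n}$ for a suitable constant $C$. Comparing the two, $b \le \max_{\gl\in\gs(A)}\Re\gl$. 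For the reverse inequality — which is the substantive half — I would argue that if some $\gl$ had $\Re\gl > b$, then, since $\norm{\E X_n}\le \sum_i \E\abs{X_{ni}} $ and each $X_{ni}\ge0$ by tenability, we would get $\sum_i \E X_{ni} = a\cdot\E X_n$ up to bounded factors only if all activities were positive; in general one must instead look at the total number of balls of the positive-activity colours. The clean way is: by \eqref{EX} and Lemma~\ref{L1b}, $P_{\gl_1}\E X_n = F_{0,n}P_{\gl_1}X_0 = \frac{n+w_0}{w_0}(a\cdot X_0)v_1$, i.e.\ the $b$-component already grows like $n^b$; and if there were an eigenvalue $\mu$ with $\Re\mu > b$ and $P_\mu X_0 \ne 0$ (after removing colours that cannot appear — see the next paragraph), then $F_{0,n}P_\mu X_0$ would grow faster, like $n^{\Re\mu}$, forcing $\sum_i \E X_{ni}$ to grow faster than $n^b$; but tenability plus balance cap $a\cdot\E X_n$ at $w_0+nb$, and a faster-growing nonnegative vector whose $a$-weighted sum is only $O(n)$ is impossible once we know (via the irreducibility-type reduction) that $v_1$ has strictly positive entries on the support. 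The main obstacle is precisely this last step: controlling $\norm{\E X_n}$ from below by $a\cdot\E X_n$ requires knowing that the mass cannot escape into colours with zero activity, which is false in general (cf.\ \refE{Ebad}) — hence the hypothesis that every colour can actually appear.

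That hypothesis is used as follows. Let $S\subseteq\{1,\dots,q\}$ be the set of colours with $a_i>0$; the colours not in $S$ carry no weight and, by \eqref{urn}, are never drawn, so their counts only increase and play no role in the dynamics of the $S$-colours. Restricting attention to $\tA$ (the submatrix on $S$), I would apply the Perron--Frobenius theorem in the form used in \refR{Rgl1b}: adding $cI$ for large $c$ makes $\tA+cI$ a nonnegative matrix, the left eigenvector $a|_S$ is strictly positive, so $b+c$ is the Perron eigenvalue of $\tA+cI$, whence $\Re\gl \le b$ for every $\gl\in\gs(\tA)$ and $b$ is semisimple, i.e.\ $\nu_\gl = 0$ whenever $\Re\gl = b$. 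Finally, the assumption that every colour actually appears means that a colour $i\notin S$, once present, arose from a draw of some colour in $S$; tracing this back, the eigenvalues of $A$ relevant to the growth of $X_n$ are exactly those of $\tA$ — more precisely, any $\gl\in\gs(A)\setminus\gs(\tA)$ has $P_\gl$ annihilating the part of the state reachable from $X_0$, so it contributes nothing — and therefore $\Re\gl\le b$ and the semisimplicity at $\Re\gl=b$ transfer to $A$ on the reachable subspace. This justifies the statement ``we may thus assume $\gl_1=b$'': after discarding redundant colours, $\gs(A)=\gs(\tA)$ and the claim is exactly Perron--Frobenius for $\tA$. I expect the bookkeeping of the reachable subspace versus $\gs(A)$ to be the fiddliest part, but it is only a matter of making the reduction to $S$ precise.
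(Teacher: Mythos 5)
There is a genuine gap, and it sits at the heart of your argument: the appeal to Perron--Frobenius for $\tA+cI$. That step requires $\tA$ to be essentially nonnegative (nonnegative off-diagonal entries), which is exactly what fails under the paper's general notion of tenability: since the paper explicitly does \emph{not} assume $\xi_{ij}\ge0$ for $j\neq i$ (see \refR{Rten}), the intensity matrix can have negative off-diagonal entries even when all activities are positive and every colour appears. A concrete instance is the gap-type urn in \refS{Sex}, where $\xi_2=(4,-1,-2,0,0)$ gives $(A)_{32}=-2<0$; adding $cI$ only changes the diagonal, so $\tA+cI$ is not a nonnegative matrix and the Perron eigenvalue argument (and with it your claim of semisimplicity at $\Re\gl=b$) does not apply. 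Indeed, \refR{Rgl1b} already points out that Perron--Frobenius settles the matter precisely in the restricted settings ($\xi_{ij}\ge0$ or the condition of \refR{Rten}), and that \refE{Ebad} shows the general case needs a different treatment --- which is the whole reason \refL{Lapp} exists. Also, having a strictly positive left eigenvector does not by itself identify $b$ as the spectral radius without nonnegativity of the matrix, so the reduction to $\tA$ does not rescue the argument.

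Your first paragraph is actually on the right track and close to the paper's proof: one compares the growth $n^{\Re\gl}\log^{\nu_\gl}n$ of $F_{0,n}P_\gl$ from \refL{L1} with the elementary bound $\norm{\E X_n}\le\norm{X_0}+n\max_i\E\norm{\xi_i}=O(n)$ (no positivity of $v_1$ or irreducibility is needed for this upper bound, so the worry about mass in zero-activity colours is moot). But you stop at the point you yourself flag: this only yields $N_\gl^{\nul}P_\gl X_0=0$ when $\Re\gl>b$, or $\Re\gl=b$ with $\nul>0$, and the case where this projection of $X_0$ vanishes is where all the work lies. The paper's resolution is to restart the urn at any time $k$ (conditioning on $X_k$, the urn is a.s.\ tenable), so the same comparison gives $N_\gl^{\nul}P_\gl X_k=0$ a.s.\ for every $k$, hence $N_\gl^{\nul}P_\gl\gD X_k=0$ a.s.; the hypothesis that every colour has positive probability of appearing then forces $N_\gl^{\nul}P_\gl\xi_j=0$ a.s.\ for each $j$ with $a_j>0$, whence $N_\gl^{\nul}P_\gl A=0$ (the columns of $A$ are $a_j\E\xi_j$), and since $P_\gl A=\gl P_\gl+N_\gl$ and $N_\gl^{\nul+1}=0$ this gives $\gl N_\gl^{\nul}=0$, i.e.\ $\gl=0$, a contradiction. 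To repair your proposal you would need to replace the Perron--Frobenius step by an argument of this kind (or restrict the lemma to the nonnegative-replacement setting, which would defeat its purpose).
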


\begin{proof}
As in \refS{Smatrix}, we may and shall assume that $b=1$.

Suppose that $\gl\in\gs(A)$.
By \eqref{EX} and \refL{L1},
\begin{equation}\label{app}
  \begin{split}
\pl \E X_n 
&= \pl F_{0,n} X_0
= F_{0,n}\pl X_0
\\&
=n^{\gl}\log^{\nu_\gl}n \frac{\gG(w_0)}{\nul!\,\gG(w_0+\gl)}\nl^{\nul}\pl X_0
+ o\bigpar{n^{\Re\gl}\log^{\nu_\gl}n}.	
  \end{split}
\end{equation}

On the other hand, by our assumption \eqref{Exi2}, $\E\norm{\xi_i}<\infty$
for each $i$, and thus $\E\norm{\gD X_n}\le \max_i\E\norm{\xi_i}<\infty$
and therefore $\norm{\E X_n}\le\E\norm{X_n}=O(n)$.
Hence, 
\begin{equation}\label{app2}
\pl \E X_n=O(n).  
\end{equation}

Suppose now that either $\Re\gl>1$ or $\Re\gl=1$ and $\nul>0$.
Then \eqref{app} and \eqref{app2} yield the desired contradiction unless 
\begin{equation}\label{x00}
 \nl^{\nul}\pl X_0=0. 
\end{equation}
Moreover, if we run the urn for $k$ steps and regard
$X_k$ as a new starting position (conditioning on $X_k$), then the resulting
urn is \as{} tenable; hence the argument just given shows that
\begin{equation}%\label{xk0}
 \nl^{\nul}\pl X_k=0
\end{equation}
\as{} for every $k\ge0$.
Hence, also
$ \nl^{\nul}\pl \gD X_k=0$ a.s. 
If $j$ is any colour with $a_j>0$, then, by assumption, there exists a
$k$ such that $\P(X_{k,j}>0)>0$, and thus with positive probability $I_{k+1}=j$
and then $\gD X_k$ is a copy of $\xi_j$. Consequently, if $a_j>0$ then
\begin{equation}%\label{xk0}
 \nl^{\nul}\pl \xi_j=0
\end{equation}
\as, and thus
\begin{equation}%\label{xk0}
 \nl^{\nul}\pl \E\xi_j=0.
\end{equation}
In other words, for every $j$
\begin{equation}\label{xkj}
 \nl^{\nul}\pl \bigpar{a_j \E\xi_j}=0.
\end{equation}
However, by \eqref{A}, the $j$-th column of $A$ is
$a_j\E\xi_j$. Consequently,
\eqref{xkj} is equivalent to
$ \nl^{\nul}\pl A=0$.
Since $\pl A =\gl\pl+\nl$ by \eqref{24b}, and $\nl^{\nul+1}=0$,
this yields
\begin{equation}%\label{xk0}
0= \nl^{\nul}\pl A
=\gl \nl^{\nul}\pl +\nl^{\nul+1}
=\gl \nl^{\nul}
\end{equation}
and thus $\gl=0$, which contradicts the assumption on $\gl$.

Consequently, $\Re\gl\le1= b$ for every $\gl\in\gs(A)$, and since
$b\in\gs(A)$, $\Re\gl_1=\max_{\gl\in\gs(A)}\Re\gl=b$.
We have not ruled out the possibility that there are other eigenvalues $\gl$
with
$\Re\gl=b$, see \refR{Rbad} below, but even if this would happen, we have
shown that they all have $\nu_\gl=0$, so we are free to choose $\gl_1=b$.
\end{proof}

\begin{remark}
  As noted in \refR{Rmulti}, the eigenvalue $\gl_1=b$ may be multiple.
\refL{Lapp} shows that $\nu_1:=\nu_{\gl_1}=0$ also in this case.
\end{remark}

\begin{remark}\label{Rbad}
  \refL{Lapp} is not completely satisfactory since it does not rule out the
  possibility that besides $b$, there is also some complex eigenvalue $\gl=b+it$
  with $t\neq0$. We do not believe that this is possible, but we do not know
  a proof for a general tenable urn under our assumptions.
\end{remark}

\section{A note on \eqref{gSI}}\label{SSnote}
In the balanced case,
by \eqref{p1} and \eqref{balanced},
a.s.,
\begin{equation}
  \pli\xi_i=(a\cdot\xi_i)v_1=bv_1=\gl_1v_1,
\end{equation}
and thus, by \eqref{B} and \eqref{A},
\begin{equation}
  \begin{split}
\pli B &
=\sumiq a_i v_{1i}\E\bigpar{\pli\xi_i\xi_i'}
=\sumiq a_i v_{1i}\gl_1v_1\E\bigpar{\xi_i'}
=\gl_1v_1\Bigpar{\sumiq a_i v_{1i}\E\xi_{ij}}_j'
\\&
=\gl_1v_1\Bigpar{\sumiq (A)_{ji} v_{1i}}'_j
=\gl_1v_1\bigpar{A v_{1}}'
=\gl_1^2v_1v_{1}'.
  \end{split}
\raisetag{1.5\baselineskip}
\end{equation}
Since $B$ is symmetric, also
$B\pli'=(\pli B)'=\gl_1^2v_1v_1'$, and thus
\begin{equation}
  \pli B \pli' = \pli B = B \pli' = \gl_1^2 v_1v_1'
\end{equation}
and, as a simple consequence, still in the balanced case,
\begin{equation}\label{PBP}
  \PIx B \PIx' = \PIx B = B \PIx' = B-\gl_1^2 v_1v_1'.
\end{equation}
Cf.\ \refL{LYlim} (where $\gl_1=1$).

Hence, in the balanced case, we can omit either $\PIx$ or $\PIx'$ in
\eqref{gSI}. 
(This was noted empirically by Axel Heimb\"urger and Cecilia Holmgren,
personal communication.)
However, by \eqref{PBP}, 
we cannot omit both, nor even move both outside the integral,
because $e^{sA}v_1=e^{\gl_1s}v_1$ and thus
\begin{equation}
\intoo e^{sA}v_1v_1'e^{sA'}e^{-\gl_1s}\dd s
=\intoo e^{\gl_1s}v_1v_1'\dd s,   
\end{equation}
which diverges.

\begin{ack}
  I thank the anynomous referees for insightful and helpful comments.
\end{ack}

\newcommand\AAP{\emph{Adv. Appl. Probab.} }
\newcommand\JAP{\emph{J. Appl. Probab.} }
\newcommand\JAMS{\emph{J. \AMS} }
\newcommand\MAMS{\emph{Memoirs \AMS} }
\newcommand\PAMS{\emph{Proc. \AMS} }
\newcommand\TAMS{\emph{Trans. \AMS} }
\newcommand\AnnMS{\emph{Ann. Math. Statist.} }
\newcommand\AnnPr{\emph{Ann. Probab.} }
\newcommand\CPC{\emph{Combin. Probab. Comput.} }
\newcommand\JMAA{\emph{J. Math. Anal. Appl.} }
\newcommand\RSA{\emph{Random Struct. Alg.} }
\newcommand\ZW{\emph{Z. Wahrsch. Verw. Gebiete} }
\newcommand\DMTCS{\jour{Discr. Math. Theor. Comput. Sci.} }

\newcommand\AMS{Amer. Math. Soc.}
\newcommand\Springer{Springer-Verlag}
\newcommand\Wiley{Wiley}

\newcommand\vol{\textbf}
\newcommand\jour{\emph}
\newcommand\book{\emph}
\newcommand\inbook{\emph}
\def\no#1#2,{\unskip#2, no. #1,} %(typeset after year) 
\newcommand\toappear{\unskip, to appear}

\newcommand\arxiv[1]{\texttt{arXiv:#1}}
\newcommand\arXiv{\arxiv}

\def\nobibitem#1\par{}

\end{document}